\theoremstyle{definition}
\newtheorem{thm}{Theorem}
\newtheorem{cor}{Corollary}
\newcommand{\mGammax}[2]{
\raise1.5pt\hbox{$\displaystyle \mathop{\Gamma}$}\hspace{-1.5pt}_{#1}^{\,(\mathrm{m})#2}}
\newcommand{\mGammay}[2]{\tilde{\Gamma}_{#1}^{\,(\mathrm{m})#2}}
\newcommand{\oGammax}[2]{\Gamma_{#1}^{(0)#2}}
\newcommand{\oGammay}[2]{\tilde{\Gamma}_{#1}^{(0)#2}}
\newcommand{\oGammaxy}[2]{\mathring{\Gamma}_{#1}^{(0)#2}}
\newcommand{\eGammax}[2]{\Gamma_{#1}^{(\mathrm{e})#2}}
\newcommand{\eGammay}[2]{\tilde{\Gamma}_{#1}^{\,(\mathrm{e})#2}}
\newcommand{\oGammayx}[2]{\mathring{\Gamma}_{#1}^{{\,(0)}#2}}
\newcommand{\Deltaxy}{\mathring{\Delta}}
\newcommand{\tphi}{\tilde{\phi}}
\newcommand{\Tx}[2]{T_{#1}^{#2}}
\newcommand{\Ty}[2]{\widetilde{T}_{#1}^{#2}}
\newcommand{\Qy}[2]{\tilde{Q}_{#1}^{#2}}
\newcommand{\py}{\displaystyle \tilde{p}}
\newcommand{\lx}{\displaystyle l}
\newcommand{\ly}{\displaystyle \tilde{l}}
\newcommand{\gx}[2]{g_{#1}^{#2}}
\newcommand{\gy}[2]{\tilde{g}_{#1}^{#2}}
\newcommand{\gxy}[2]{\mathring{g}_{#1}^{#2}}
\newcommand{\vxe}[2]{v_{#1}^{\mathrm{(e)}#2}}
\newcommand{\vym}[2]{\tilde{v}_{#1}^{\mathrm{(m)}#2}}
\newcommand{\enablax}[2]{\nabla_{#1}^{(\mathrm{e})#2}}
\newcommand{\onablax}[2]{\nabla_{#1}^{(0)#2}}
\newcommand{\mnablax}[2]{\nabla_{#1}^{(\mathrm{m})#2}}
\newcommand{\enablay}[2]{\widetilde{\nabla}_{#1}^{(\mathrm{e}) #2}}
\newcommand{\onablay}[2]{\widetilde{\nabla}_{#1}^{(0) #2}}
\newcommand{\mnablay}[2]{\widetilde{\nabla}_{#1}^{(\mathrm{m}) #2}}
\newcommand{\onablaxy}[2]{\mathring{\nabla}_{#1}^{(0)#2}}
\newcommand{\upi}{u_\pi}
\newcommand{\upiprime}{u_{\pi'}}
\newcommand{\htheta}{\hat{\theta}}
\newcommand{\dd}{\mbox{d}}
\newcommand{\ee}{\mbox{e}}
\newcommand{\oo}{\mathrm{o}}
\newcommand{\E}{{\rm E}}
\newcommand{\piJ}{\pi_\mathrm{J}}
\newcommand{\piP}{\pi_\mathrm{P}}
\begin{document}

\begin{frontmatter}
\title{Asymptotic Properties of Bayesian Predictive Densities When the Distributions of Data and Target Variables are Different}
\runtitle{Asymptotic Properties of Bayesian Predictive Densities}

\begin{aug}
\author[a]{\fnms{Fumiyasu} \snm{Komaki}\ead[label=e1]{komaki@mist.i.u-tokyo.ac.jp}}\thanksref{t1}
\runauthor{F.~Komaki}
\address[a]{Department of Mathematical Informatics, Graduate School of
Information Science and Technology, the University of Tokyo, 7-3-1 Hongo,
Bunkyo-ku, Tokyo 113-8656, JAPAN,\\\printead{e1}}

\thankstext{t1}{RIKEN Brain Science Institute, 2-1 Hirosawa, Wako City, Saitama 351-0198, JAPAN}
\end{aug}

\begin{abstract}
Bayesian predictive densities
when the observed data $x$ and
the target variable $y$ to be predicted have different distributions
are investigated by using the framework of information geometry.
The performance of predictive densities is evaluated by the Kullback--Leibler divergence.
The parametric models are formulated as Riemannian manifolds.
In the conventional setting in which $x$ and $y$~have the same distribution,
the Fisher--Rao metric and the Jeffreys prior play essential roles.
In the present setting in which $x$ and $y$ have different distributions,
a new metric, which we call the predictive metric, constructed by using the Fisher information matrices of $x$ and $y$,
and the volume element based on the predictive metric play the corresponding roles.
It is shown that Bayesian predictive densities based on priors constructed by using non-constant positive superharmonic functions
with respect to the predictive metric asymptotically
dominate those based on the volume element prior of the predictive metric.
\end{abstract}

\begin{keyword}
\kwd{differential geometry}
\kwd{Fisher--Rao metric}
\kwd{Jeffreys prior}
\kwd{Kullback--\break Leibler divergence}
\kwd{predictive metric}
\end{keyword}


\end{frontmatter}


\vspace*{12pt}\section{Introduction}
Suppose that we have independent observations $x(1), x(2), \ldots, x(N)$ from
a probability density $p(x \mid \theta)$ that belongs to a parametric model $\{p(x \mid \theta) \mid \theta \in \Theta\}$,
where $\theta = (\theta^1, \theta^2, \ldots, \theta^d)$ is an unknown $d$-dimensional parameter and $\Theta$ is the parameter space.
The random variable $y$ to be predicted is independently distributed according to
a density $\py(y \mid \theta)$ in a parametric model $\{\py(y \mid \theta) \mid \theta \in \Theta\}$,
possibly different from $\{p(x \mid \theta) \mid \theta \in \Theta\}$,
with the same parameter $\theta$.
The objective is to construct a predictive density $\hat{p}(y;x^N)$ for $y$ by using $x^N := (x(1),\ldots,x(N))$.
The performance of $\hat{p}(y;x)$ is evaluated by the Kullback--Leibler divergence
\[
 D(\py(y \mid \theta), \hat{p}(y;x^N)) := \int \py(y \mid \theta) \log \frac{\py(y \mid \theta)}{\hat{p}(y;x^N)} \dd y\
\]
from the true density $\py(y \mid \theta)$ to the predictive density $\hat{p}(y;x^N)$.
The risk function is given by
\[
 \E \Bigl[ D(\py(y \mid \theta), \hat{p}(y;x^N)) \, \Big| \, \theta \Bigr]
= \iint p(x^N \mid \theta) \py(y \mid \theta) \log \frac{\py(y \mid \theta)}{\hat{p}(y;x^N)} \dd y \dd x^N.
\]

It is widely recognized that
plug-in densities $\py(y \mid \hat{\theta})$ constructed by replacing the unknown parameter $\theta$
by an estimate $\hat{\theta}(x^N)$
may not perform very well
and that Bayesian predictive densities
\[
 \py_\pi(y \mid x^N) :=
\frac{\int \py(y \mid \theta) p(x^N \mid \theta) \pi(\theta) \dd \theta}
{\int p(x^N \mid \theta) \pi(\theta) \dd \theta}
\]
constructed by using a prior $\pi$ perform better than plug-in densities.
If the value of $\theta$ is given, there is no specific meaning of considering the conditional
density of $y$ given $x^N$ since the obvious relation
$p(y \mid x, \theta) = p(y \mid \theta)$ holds.
However, if $\theta$ is unknown, Bayesian predictive densities $p_\pi(y \mid x^N)$
constructed by introducing a prior density $\pi(\theta)$ on the parameter space are useful
to approximate the true density $p(y \mid \theta)$ as discussed in \citet{AD75cambridge} and \citet{G93cambridge}.
In fact, there exists a predictive density whose asymptotic risk is smaller than
that of a plug-in density unless the mean mixture curvature of the model
manifold vanishes, see \citet{K96biometrika} and \citet{H98as} for details.
The choice of $\pi$ becomes important especially when the sample size $N$ is not very large.
Although the Jeffreys prior is a widely known default prior,
it does not perform satisfactorily especially when the unknown parameter is multidimensional as Jeffreys himself
pointed out.

\citet{K01biometrika} constructed a Bayesian predictive density incorporating the advantage of
shrinkage methods for the multivariate normal model.
See also \citet{GLX06as} for useful results for the normal model.

In the conventional setting in which the distributions of $x(i)$, $i=1,\ldots,N$, and $y$ are the same,
asymptotic theory of prediction based on general parametric models has been studied
by using the framework of information geometry, see \citet{K96biometrika}.
In information geometry, a parametric statistical model is regarded as a differentiable manifold,
which we call the model manifold, and the parameter space
is regarded as a coordinate system of the manifold, see \citet{A85springer}.
The Fisher--Rao metric is
a Riemannian metric based on the Fisher information matrix on the model manifold.
The Jeffreys prior $\pi_\mathrm{J}(\theta)$ corresponds to the volume element of the model manifold associated with the Fisher--Rao metric.
When the distributions of $x(i)$, $i=1,\ldots,N$, and $y$ are the same,
the asymptotic difference between the risks of $\py_\pi(y \mid x^N)$ and $\py_\mathrm{J}(y \mid x^N)$ is given by
\begin{align}
N^2 \biggl[ \E \big\{ & D (\py(y \mid \theta), \py_\pi(y \mid x^N) \, \big| \, \theta \big\}
- \E \big\{ D ( \py(y \mid \theta), \py_\mathrm{J}(y \mid x^N) \, \big| \, \theta \big\} \biggr] \notag\\
=& \frac{\displaystyle \Delta \left(\frac{\pi}{\piJ}\right)}{\displaystyle \left( \frac{\pi}{\piJ} \right) }
- \frac{1}{2} \sum_{i=1}^d \sum_{j=1}^d g^{ij} \frac{\displaystyle \partial_i \left(\frac{\pi}{\piJ}\right) \partial_j \left(\frac{\pi}{\piJ}\right)}
{\displaystyle \left(\frac{\pi}{\piJ}\right)^2}
+ \oo(1)
= 2 \frac{\displaystyle \Delta \left(\frac{\pi}{\piJ}\right)^{\frac{1}{2}}}
{\displaystyle \left( \frac{\pi}{\piJ} \right)^{\frac{1}{2}}}
+ \oo(1),
\label{main0}
\end{align}
where
$\partial_i$ denotes $\partial/\partial \theta^i$,
$g_{ij} := \E \{\partial_i \log p(x \mid \theta) \partial_j \log p(x \mid \theta) \mid \theta\}$,
$g^{ij}$ denotes the $(i,j)$-element of the inverse of the $d \times d$ matrix $(g_{ij})$,
and $\Delta$ is the Laplacian, see \cite{K06as}.
The Laplacian $\Delta$ on a Riemannian manifold endowed with a metric $g_{ij}$ is defined by
\begin{align}
\Delta f = |g|^{-1/2} \sum_i \sum_j \partial_i (|g|^{1/2} g^{ij} \partial_j f)
= \sum_i \sum_j \nabla_{i}^{(0)} (g^{ij} \partial_j f),
\label{deflaplacian}
\end{align}
where $|g|$ is the determinant of the $d \times d$ matrix $(g_{ij})$,
$f$ is a smooth real function on $\Theta$, and $\nabla_{i}^{(0)}$ denotes
the covariant derivative, defined in the next section.
The indices $i,j,k \ldots$ run from $1$ to $d$.
Note that both the definition \eqref{deflaplacian} of the Laplacian
and the definition $\Delta f = -|g|^{-1/2} \sum_i \sum_j \partial_i (|g|^{1/2} g^{ij} \partial_j f)$ that differs in sign
are widely adopted in the mathematics literature,
although it is confusing.
Because of \eqref{main0}, if there exists a non-constant positive superharmonic function $f$,
i.e.\ a non-constant positive function satisfying $\Delta f \leq 0$ for every $\theta$, on the model manifold, then
the Bayesian predictive density based on the prior density defined by $\pi = f \piJ$
asymptotically dominates that based on the Jeffreys prior.
Here, the Riemannian geometric structure of the model manifold based on the Fisher--Rao metric
plays a fundamental role.

In practical applications, it often occurs that observed data $x(i)$, $i = 1,\ldots,N$, and the target variable $y$ to be predicted
have different distributions.
Regression models are a typical example.
Suppose that we observe
$x = W \theta + \varepsilon$, where $W$ is a given $n \times d$ matrix $(n \ge d)$,
and predict $y = Z \theta + \varepsilon$,
where $Z$ is a given $m \times d $ matrix
and $ \theta = (\theta^1 , \dotsb, \theta^d) $ is an unknown parameter.
Then, the Fisher information matrices for the same parameter $\theta$
based on $ p(x \mid \theta)$ and $\tilde{p} (y \mid \theta)$ are different.
Similar situations also occur in nonlinear regression problems.
\citet{KK08jmva} and \citet{GX08et} showed that shrinkage priors are useful for constructing
Bayesian predictive densities for
linear regression models when the observations are normally distributed with known variance.
However, it has been difficult to construct useful priors for general models other than the normal models
when $x$ and $y$ have different distributions.

In the present paper, we study asymptotic theory for the setting in which $x(i)$, $i=1,\ldots,N$, and $y$
have different distributions.
Although several asymptotic properties of predictive distributions for such a setting are studied by \citet{FKA04sjs},
the result corresponding to \eqref{main0} has not been explored.
The generalization is not straightforward because
two different differential geometric structures,
one for $p(x \mid \theta)$ and the other for $\py(y \mid \theta)$,
such as the Fisher--Rao metrics exist in the present setting.

We introduce a new metric $\gxy{ij}{}$, which we call the predictive metric,
depending on both $p(x \mid \theta)$ and $\py(y \mid \theta)$.
The predictive metric $\gxy{ij}{}$ and the volume element $|\gxy{}{}|^{1/2} \dd \theta^1 \cdots \dd \theta^d$
of it correspond to the Fisher--Rao metric
and the Jeffreys prior in the conventional setting.

In Section 2,
we obtain an expansion of the difference of the risk functions of Bayesian predictive densities.
Each term in the expansion is represented by using geometrical quantities and is invariant with respect to parameter transformations.
In Section 3, we introduce the predictive metric $\gxy{ij}{}$ and evaluate
the asymptotic risk difference between a Bayesian predictive density
based on a prior $\pi$ and that based on the volume element prior $|\gxy{}{}|^{1/2} \dd \theta^1 \cdots \dd \theta^d$ of the predictive metric
$\gxy{ij}{}$.
The asymptotic risk difference is represented by using the Laplacian associated with the predictive metric $\gxy{ij}{}$.
In Section 4, we consider three examples and construct superior priors by using
the formula obtained in Section 3.

\section{An expansion of the risk of predictive densities}
First, we prepare several information geometrical notations to be used.
In the following,
the quantities associated with the model $\{p(x \mid \theta) \, | \, \theta \in \Theta \}$
are denoted without tilde,
and
those associated with the model $\{\py(y \mid \theta) \, | \, \theta \in \Theta)\}$ are denoted with tilde.
We put $l := \log p(x \mid \theta)$ and $\ly := \log \py(y \mid \theta)$.
The Fisher--Rao metrics on the model manifolds $\{p(x \mid \theta) \, | \, \theta \in \Theta \}$
and $\{\py(y \mid \theta) \, | \, \theta \in \Theta \}$ are given by
\begin{gather*}
\gx{ij}{}(\theta) := \E \left(\partial_i \lx \partial_j \lx \, \big| \, \theta \right) ~~~ \mbox{and} ~~~
\gy{ij}{}(\theta) := \E \bigl(\partial_i \ly \partial_j \ly \, \big\vert \, \theta \bigr),
\end{gather*}
respectively.
The $(i,j)$-elements of the inverses of the $d \times d$ matrices $(\gx{ij}{})$ and $(\gy{ij}{})$ are denoted by
$\gx{}{ij}$ and $\gy{}{ij}$, respectively.
We define
\begin{gather*}
\Tx{ijk}{}(\theta) :=
\E \Bigl(\partial_i \lx \partial_j \lx \partial_k \lx \, \Big| \, \theta \Bigr), ~~~
\Ty{ijk}{}(\theta) :=
\E \Bigl(\partial_i \ly \partial_j \ly \partial_k \ly \, \Big| \, \theta \Bigr) , \\
 \eGammax{ijk}{}(\theta) := \E \Bigl( \partial_i \partial_j \lx \partial_k \lx \, \Big| \, \theta \Bigr), ~~~
 \eGammay{ijk}{}(\theta) := \E \Bigl( \partial_i \partial_j \ly \partial_k \ly \, \Big| \, \theta \Bigr), \\
 \mGammax{ijk}{}(\theta) := \E \Bigl( \partial_i \partial_j \lx \partial_k \lx \, \Big| \, \theta \Bigr) + \Tx{ijk}{}(\theta), ~~~
 \mGammay{ijk}{}(\theta) := \E \Bigl( \partial_i \partial_j \ly \partial_k \ly \, \Big| \, \theta \Bigr) + \Ty{ijk}{}(\theta), \\
 \oGammax{ijk}{}(\theta) := \frac{1}{2} \Bigl\{ \eGammax{ijk}{}(\theta) + \mGammax{ijk}{}(\theta) \Bigr\}
 = \frac{1}{2} \left\{\partial_i \gx{jk}{}(\theta) + \partial_j \gx{ki}{}(\theta) - \partial_k \gx{ij}{}(\theta)\right\}, \\
 \oGammay{ijk}{}(\theta) := \frac{1}{2} \Bigl\{\eGammay{ijk}{}(\theta) + \mGammay{ijk}{}(\theta) \Bigr\}
 = \frac{1}{2} \left\{\partial_i \gy{jk}{}(\theta) + \partial_j \gy{ki}{}(\theta) - \partial_k \gy{ij}{}(\theta)\right\},
\end{gather*}
and
\begin{align*}
\Qy{ijkl}{}(\theta) :=
\E \Bigl(\partial_i \ly \partial_j \ly \partial_k \ly \partial_l \ly \, \Big| \, \theta \Bigr).
\end{align*}
Here,
$\eGammax{ijk}{}$ are the e-connection coefficients,
$\mGammax{ijk}{}$ are the m-connection coefficients, and $\oGammax{ijk}{}$ are the Riemannian connection coefficients.
The relations
\begin{align}
\label{duality}
\partial_i \gx{jk}{} = \eGammax{ijk}{} + \mGammax{ikj}{}, ~~~\mbox{and}~~~
\partial_i \gy{jk}{} = \eGammay{ijk}{} + \mGammay{ikj}{}
\end{align}
represent the duality between $\eGammax{ijk}{}$ and $\mGammax{ijk}{}$ with respect to the metric $\gx{ij}{}$,
and the duality between $\eGammay{ijk}{}$ and $\mGammay{ijk}{}$ with respect to the metric $\gy{ij}{}$, respectively.

Covariant derivatives $\enablax{i}{} u^j$, $\onablax{i}{} u^j$, and $\mnablax{i}{} u^j$
of a vector field $u^j$ with respect to the connection coefficients $\eGammax{ij}{k}$, $\oGammax{ij}{k}$, and $\mGammax{ij}{k}$
are defined by $\enablax{i}{} u^j := \partial_i u^j + \sum_k \eGammax{ik}{j} u^k$,
$\onablax{i}{} u^j := \partial_i u^j + \sum_k \oGammax{ik}{j} u^k$,
and $\enablax{i}{} u^j := \partial_i u^j + \sum_k \eGammax{ik}{j} u^k$, respectively,
where $\eGammax{ik}{j} = \sum_l \eGammax{ikl}{} \gx{}{jl}$, $\oGammax{ik}{j} = \sum_l \oGammax{ikl}{} \gx{}{jl}$,
and $\mGammax{ik}{j}= \sum_l \mGammax{ikl}{}\gx{}{jl}$.
In the same way, the covariant derivatives $\enablay{i}{} u^j$, $\onablay{i}{} u^j$, and $\mnablay{i}{} u^j$,
with respect to the connection coefficients
$\eGammay{ik}{j} = \sum_l \eGammay{ikl}{} \gy{}{jl}$, $\oGammay{ik}{j} = \sum_l \oGammay{ikl}{} \gy{}{jl}$,
and $\mGammay{ik}{j} = \sum_l \mGammay{ikl}{} \gy{}{jl}$, are defined.

Theorem \ref{riskdiff} below is used in the following sections.

\vspace{0.3cm}
\noindent
\begin{thm} \label{riskdiff}
The difference between the risk functions of Bayesian predictive densities $\py_{\pi}(y \mid x^N)$
and $\py_{\pi'}(y \mid x^N)$
based on priors $\pi(\theta) \dd \theta$ and $\pi'(\theta) \dd \theta$, respectively,
is given by
\begin{align}
N^2 \biggl[ \E \big\{ & D (\py(y \mid \theta), \py_{\pi}(y \mid x^N) \, \big| \, \theta \big\}
- \E \big\{ D (\py(y \mid \theta), \py_{\pi'}(y \mid x^N) \, \big| \, \theta \big\} \biggr] \nonumber \\
=&
\left(
\frac{1}{2} \sum_{i, j}  \gy{ij}{} \upi^i \upi^j
+ \sum_{i, j, k}  \gy{ij}{} \gx{}{jk} \enablay{k}{} \upi^i \right) \notag \\
& -
\left(
\frac{1}{2} \sum_{i, j} \gy{ij}{} \upiprime^i \upiprime^j
+ \sum_{i, j, k}  \gy{ij}{} \gx{}{jk} \enablay{k}{} \upiprime^i \right) + \mathrm{o}(1),
\label{6-2}
\end{align}
where
\begin{equation*}
u^i_\pi(\theta) := \sum_k \gx{}{ik}(\theta) \Bigl\{ \partial_k \log \pi(\theta) - \sum_j \eGammax{kj}{j}(\theta) \Bigr\}
+ \sum_{k, l}  \gx{}{kl}(\theta) \Bigl\{ \mGammay{kl}{i}(\theta) - \mGammax{kl}{i}(\theta) \Bigr\}.
\end{equation*}
\end{thm}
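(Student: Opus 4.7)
The plan is to reduce the risk difference to a log-ratio expectation and then asymptotically expand it. Because $\int\py(y\mid\theta)\log\py(y\mid\theta)\,\dd y$ is independent of the prior,
\[
\E\bigl\{D(\py(y\mid\theta),\py_\pi(y\mid x^N)) - D(\py(y\mid\theta),\py_{\pi'}(y\mid x^N))\bigr\} = \E\bigl[\log\py_{\pi'}(y\mid x^N) - \log\py_\pi(y\mid x^N)\bigr],
\]
so it suffices to obtain an $O(N^{-2})$ asymptotic expansion of $\log\py_\pi(y\mid x^N)$ for a generic smooth positive prior $\pi$, and then subtract the analogous expansion for $\pi'$.

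The technical engine is a Laplace/stochastic expansion of the posterior about the maximum likelihood estimator $\htheta=\htheta(x^N)$. Rescaling $\theta=\htheta+\tau/\sqrt{N}$, Taylor-expanding $\log p(x^N\mid\theta)$ through fourth order and $\log\pi(\theta)$ through second order, one obtains an asymptotic posterior that is approximately Gaussian with covariance $N^{-1}\gx{}{ij}(\htheta)$ and whose mean differs from $\htheta$ by an $O(N^{-1})$ bias involving $\partial_i\log\pi$, the trace $\sum_j\eGammax{kj}{j}$ (from the Jacobian of the Laplace integral), and $\mGammax{kl}{i}$ (from the MLE bias). Expanding $\log\py(y\mid\theta)$ about $\htheta$ through cubic order and substituting into $\log\py_\pi(y\mid x^N)=\log\E_{\pi\mid x^N}[\py(y\mid\theta)]$ via $\log(1+t)=t-\tfrac12 t^2+\cdots$, then taking the expectation with respect to $y\sim\py(\cdot\mid\theta)$ using $\E\partial_i\ly=0$, $\E(\partial_i\ly\,\partial_j\ly)=\gy{ij}{}$, $\E(\partial_i\partial_j\ly)=-\gy{ij}{}$, $\E(\partial_i\partial_j\ly\,\partial_k\ly)=\eGammay{ijk}{}$, and then over $x^N$ via the analogous identities for $\lx$, reduces the expansion to a polynomial in the Fisher matrices and connection coefficients of both models. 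After applying the duality relations \eqref{duality} the $\pi$-dependent part of the $N^{-2}$ coefficient collapses into the drift vector $\upi^i$ defined in the theorem, together with the quadratic form $\tfrac12\gy{ij}{}\upi^i\upi^j$ and the divergence-type term $\gy{ij}{}\gx{}{jk}\enablay{k}{}\upi^i$ displayed in \eqref{6-2}.

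The main obstacle is bookkeeping, for two related reasons. First, the posterior covariance is controlled by the $x$-geometry while the Taylor expansion of $\ly$ is controlled by the $y$-geometry, so contractions of the mixed form $\sum\gy{ij}{}\gx{}{jk}$ arise naturally and cannot be simplified further; this is precisely why the final formula entangles the two metrics. Second, one must carry enough terms in the stochastic expansion of $\htheta$, in the posterior moments, and in the Taylor expansion of $\ly$ to pick up every contribution of order $N^{-2}$, and then recognise that the three pieces $\partial_k\log\pi$, $\sum_j\eGammax{kj}{j}$, and $\mGammay{kl}{i}-\mGammax{kl}{i}$ must be bundled into the single contravariant object $\upi^i$, since only that combination is coordinate-invariant. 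Verifying this tensorial character, together with the cancellation of all $\pi$-independent $O(N^{-2})$ contributions upon subtracting the expansion for $\pi'$, provides a strong consistency check on the calculation.
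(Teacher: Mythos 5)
Your proposal takes essentially the same route as the paper: there, Theorem \ref{riskdiff} is read off from the full risk expansion of Theorem \ref{risk-invariant}, itself obtained (in outline) from the Fushiki--Komaki--Aihara expansion of $\py_\pi(y\mid x^N)$ about $\htheta_{\mathrm{mle}}$, the key observation being that the only prior-dependent $N^{-2}$ terms are precisely $\tfrac12\sum\gy{ij}{}\upi^i\upi^j$ and $\sum\gy{ij}{}\gx{}{jk}\enablay{k}{}\upi^i$, so everything else cancels in the difference. Your sketch of the Laplace/stochastic expansion about the MLE, the expectation identities for $\lx$ and $\ly$, and the bundling of $\partial_k\log\pi$, $\sum_j\eGammax{kj}{j}$, and $\mGammay{kl}{i}-\mGammax{kl}{i}$ into the single vector $\upi^i$ is the same calculation, left at a level of detail comparable to the paper's own outline.
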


\vspace{0.3cm}

The proof of Theorem \ref{riskdiff} is given in the Appendix.
\section{Prior construction based on the predictive metric}
In this section, we introduce a new metric defined by
\begin{equation}
 \gxy{ij}{} := \sum_{k=1}^d \sum_{l=1}^d \gx{ik}{} \gy{}{kl} \gx{jl}{}, \label{10-1}
\end{equation}
which we call the predictive metric.
Since $\gxy{ij}{}$ is positive definite, it can be adopted as a Riemannian metric on $\Theta$.
It will be shown that the predictive metric $\gxy{ij}{}$, the corresponding volume element
\begin{align}
\piP(\theta) \dd \theta := | \gxy{ij}{} |^{\frac{1}{2}} \dd \theta
= \bigl| \sum_{k=1}^d \sum_{l=1}^d  \gx{ik}{} \gy{}{kl} \gx{lj}{} \bigr|^{\frac{1}{2}} \dd \theta
= | \gx{ij}{} | | \gy{ij}{} |^{-\frac{1}{2}} \dd \theta,
\label{piP}
\end{align}
and the Laplacian $\Deltaxy$ based on $\gxy{ij}{}$
play essential roles corresponding to those played by
the Fisher--Rao metric $g_{ij}$, the Jeffreys prior $|g_{ij}|^{1/2} \dd \theta$,
and the Laplacian $\Delta$ based on $g_{ij}$
in the conventional setting where $\gx{ij}{} = \gy{ij}{}$.
Here, $|\gxy{ij}{}|$, $|g_{ij}|$, and $|\tilde{g}_{ij}|$ denote determinants of $d \times d$ matrices
$(\gxy{ij}{})$, $(g_{ij})$, and $(\tilde{g}_{ij})$, respectively.
The $(i,j)$-element of the inverse of the $d \times d$ matrix $(\gxy{ij}{})$ is given by
$\gxy{}{ij} := \sum_{k, l}  \gy{kl}{} \gx{}{ik} \gx{}{jl}$.

Here, we give an intuitive meaning of the predictive metric $\gxy{ij}{}$ by a nonrigorous argument.
In the standard estimation theory,
the Fisher-Rao metric $g_{ij}$, which is the Fisher information matrix,
corresponds to the inverse
of the asymptotic variance of the maximum likelihood estimator.
In the setting we consider,
the asymptotic variance of the maximum likelihood estimator based on $x^N$
is $(N \gx{}{})^{-1}$, where $g$ is the $d \times d$ matrix $(g_{ij})$,
and the asymptotic variance of the maximum likelihood estimator
based on both of $x^N$ and $y$ is $(N \gx{}{} + \gy{}{})^{-1}$,
where $\tilde{g}$ is the $d \times d$ matrix $(\tilde{g}_{ij})$.
The inverse of the reduction of the asymptotic variance by observing $y$ in addition to $x(i)$ $(i=1,\ldots,N)$
are given by
$\{(N g)^{-1}-(N \gx{}{} + \gy{}{})^{-1}\}^{-1} = N^2 \gxy{}{} + \mathrm{O}(N)$,
as we see in Example 1 in Section 4,
corresponding to the predictive metric $\gxy{}{}$.

The Riemannian connection coefficients with respect to the predictive metric $\gxy{ij}{}$ are given by
\[
\oGammayx{ijk}{} = \frac{1}{2} \left( \partial_i \gxy{jk}{} + \partial_j \gxy{ki}{} - \partial_{k} \gxy{ij}{} \right),
\]
and we put $\oGammaxy{ik}{j} = \sum_l \oGammaxy{ikl}{} \gxy{}{jl}$.
Then,
\begin{align}
\partial_k \log |\gxy{ij}{}|^{\frac{1}{2}} =& \frac{1}{2} \partial_k \log |\gxy{ij}{}|
= \sum_{i, j}  \frac{1}{2} (\partial_k \gxy{ij}{}) \gxy{}{ij}
= \sum_i \oGammayx{ki}{i}. \label{4-0}
\end{align}
In the same way, we have
\begin{align}
\partial_k \log |\gx{ij}{}|^{\frac{1}{2} }
= \sum_i \oGammax{ki}{i}, ~~ \mbox{and} ~~~~
\partial_k \log |\gy{ij}{}|^{\frac{1}{2} }
= \sum_i \oGammay{ki}{i}.
\label{9-2}
\end{align}
Thus,
\begin{align}
\sum_i \oGammayx{ki}{i} =& \partial_k \log | \gxy{ij}{} |^{\frac{1}{2} }
= \partial_k \log |\gx{ij}{}| - \frac{1}{2} \partial_k \log |\gy{ij}{}|
= 2 \sum_{i} \oGammax{ki}{i} - \sum_{i} \oGammay{ki}{i}.
\label{9-1}
\end{align}

The Laplacian $\Deltaxy$ with respect to the predictive metric $\gxy{ij}{}$ is defined by
\begin{align*}
\Deltaxy f = \sum_{i, j}  \onablaxy{i}{} (\gxy{}{ij} \partial_j f)
=& \sum_{i, j}  \partial_i (\gxy{}{ij} \partial_j f) + \sum_{i, j, k}  \oGammayx{ik}{i} \gxy{}{kj} \partial_j f \\
=& \sum_{i, j}  \gxy{}{ij} (\partial_i \partial_j f - \sum_k \oGammayx{ij}{k} \partial_k f),
\end{align*}
where $\onablaxy{i}{} u^j = \partial_i u^j + \sum_k \oGammaxy{ik}{j} u^k$,
and
$f$ is a real smooth function on $\Theta$.

By using these quantities, we obtain the following theorem corresponding to \eqref{main0}
in the conventional setting.

\begin{thm}
\label{maintheorem}
The difference between the risk functions of Bayesian predictive densities $\py_{\pi}(y \mid x^N)$ based on
a $\pi(\theta) \dd \theta$ and $\py_{\mathrm P}(y \mid x^N)$ based on
$\pi_\mathrm{P}(\theta) \dd \theta$
is given by\vadjust{\goodbreak}
\begin{align}
N^2 \biggl[ \E \big\{ & D (\py(y \mid \theta), \py_{\pi}(y \mid x^N) \, \big| \, \theta \big\}
- \E \big\{ D (\py(y \mid \theta), \py_{\mathrm P}(y \mid x^N)  \, \big| \, \theta \big\} \biggr] \notag\\
=& \frac{\displaystyle \Deltaxy \left(\frac{\pi}{\piP}\right)}{\displaystyle \left( \frac{\pi}{\piP} \right) }
- \frac{1}{2} \sum_{i, j}
\gxy{}{ij} \frac{\displaystyle \partial_i \left(\frac{\pi}{\piP}\right) \partial_j \left(\frac{\pi}{\piP}\right)}
{\displaystyle \left(\frac{\pi}{\piP}\right)^2}
+ \oo(1)
= 2 \; \frac{\displaystyle \Deltaxy \left(\frac{\pi}{\piP}\right)^{\frac{1}{2}}}
{\displaystyle \left( \frac{\pi}{\piP} \right)^{\frac{1}{2}}}
+ \oo(1).\label{main}
\end{align}
\end{thm}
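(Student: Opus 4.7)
My plan is to specialize Theorem~\ref{riskdiff} to $\pi'=\piP$ and then reorganize the right-hand side of \eqref{6-2} as a quantity built solely from $h := \pi/\piP$ and the predictive metric $\gxy{ij}{}$. Introducing $v^i := u^i_\pi - u^i_{\piP}$, the definition of $u^i_\pi$ shows that every term except $\sum_k\gx{}{ik}\partial_k\log\pi$ is independent of $\pi$, so
\[
v^i = \sum_k \gx{}{ik}\,\partial_k\log h.
\]
The identity $\gxy{}{ij} = \sum_{k,l}\gy{kl}{}\gx{}{ik}\gx{}{jl}$ for the inverse predictive metric then reduces the quadratic part of \eqref{6-2} to
\[
\tfrac12\sum_{i,j}\gy{ij}{}\,v^iv^j = \tfrac12\sum_{i,j}\gxy{}{ij}\,\partial_i\log h\,\partial_j\log h.
\]

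The core of the proof is to verify the identity
\[
\sum_{i,j}\gy{ij}{}\,u^i_{\piP}\,v^j + \sum_{i,j,k}\gy{ij}{}\gx{}{jk}\enablay{k}{}v^i = \Deltaxy\log h.
\]
For this I would substitute the formula for $u^i_\pi$ from Theorem~\ref{riskdiff} with $\pi=\piP$, replace $\partial_k\log\piP$ by $2\sum_i\oGammax{ki}{i}-\sum_i\oGammay{ki}{i}$ using \eqref{9-1}, and expand $\enablay{k}{}v^i = \partial_k v^i + \sum_l\eGammay{kl}{i}v^l$. The duality relations \eqref{duality} rewrite $\eGammax{ijk}{}+\mGammax{ikj}{}$ and $\eGammay{ijk}{}+\mGammay{ikj}{}$ as $\partial_i\gx{jk}{}$ and $\partial_i\gy{jk}{}$ respectively, so every connection-dependent contribution on the left-hand side becomes a combination of partial derivatives of $\gx{ij}{}$ and $\gy{ij}{}$. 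Differentiating $\gxy{ij}{}=\sum_{k,l}\gx{ik}{}\gy{}{kl}\gx{jl}{}$ by Leibniz shows that these particular combinations reassemble into $\partial_k\gxy{ij}{}$, and hence, via the defining formula for $\oGammayx{ijk}{}$ and the identity $\sum_i\oGammayx{ki}{i}=\partial_k\log|\gxy{ij}{}|^{1/2}$ from \eqref{4-0}, into the Laplacian $\Deltaxy$ acting on $\log h$ as in \eqref{deflaplacian}.

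Finally, the elementary chain rule $\Deltaxy h = h\,\Deltaxy\log h + \sum_{i,j}\gxy{}{ij}\,\partial_i h\,\partial_j h/h$ converts $\Deltaxy\log h + \tfrac12\sum\gxy{}{ij}\,\partial_i\log h\,\partial_j\log h$ into $\Deltaxy h/h - \tfrac12\sum\gxy{}{ij}\,\partial_i h\,\partial_j h/h^2$, which is the first equality in \eqref{main}; a further application of the chain rule with $F(u)=u^{1/2}$ yields the compact form $2\,\Deltaxy(h^{1/2})/h^{1/2}$. The decisive obstacle is the middle step: Theorem~\ref{riskdiff} expresses the risk difference through connection coefficients of both the $x$- and $y$-models, and there is no a priori reason why the particular combination appearing there should equal the Laplacian of a third, independently defined metric. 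The calculation succeeds precisely because $\gxy{ij}{}=\sum_{k,l}\gx{ik}{}\gy{}{kl}\gx{jl}{}$ is the unique metric whose Christoffel symbols absorb the mixture of $\eGammax{}{}$, $\mGammax{}{}$, $\eGammay{}{}$, $\mGammay{}{}$, and $\partial\log\piP$ generated by Theorem~\ref{riskdiff}; this is the geometric content that motivates calling $\gxy{ij}{}$ the predictive metric.
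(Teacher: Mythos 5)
Your proposal is correct and follows essentially the same route as the paper's proof: specializing Theorem \ref{riskdiff} to $\pi'=\piP$, writing $u^i_\pi-u^i_{\mathrm P}=\sum_k\gx{}{ik}\partial_k\log(\pi/\piP)$ (the paper's decomposition $u^i_\pi=\sum_k\gx{}{ik}\partial_k\log f+s^i+r^i$), reducing the cross and covariant-derivative terms to $\Deltaxy\log(\pi/\piP)$ via the duality relations \eqref{duality} and the volume-element identities \eqref{4-0}, \eqref{9-1} (the paper's verification of \eqref{target}), and finishing with the chain-rule identity \eqref{kakikae}. The only differences are notational (working with $v^i$ and Leibniz on $\gxy{ij}{}$ rather than $s^i,r^i$ and the inverse-metric derivative), so there is nothing substantive to add.
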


The proof of Theorem \ref{maintheorem} is given in the Appendix.

If there exists a positive constant $c$ such that $\pi' = c \pi$,
we identify the prior $\pi'$ with $\pi$
because the posterior densities based on them are identical.
In fact, the risk difference \eqref{main} between $\pi$ and $\pi_\mathrm{P}$ coincides with
that between $\pi'$ and $\pi_\mathrm{P}$.

\vspace{0.2cm}
\begin{cor}~
\label{usefulcor}
If a positive function $f(\theta)$ is superharmonic
with respect to the predictive metric $\gxy{}{}$, i.e.\ $\Deltaxy f(\theta) \leq 0$ for every $\theta \in \Theta$,
and the strict inequality holds at a point in $\Theta$,
then the Bayesian predictive density based on the prior density $\{f(\theta)\}^2 \piP (\theta)$ asymptotically dominates
the Bayesian predictive density $\py_{\mathrm P}(y \mid x^N)$ based on the prior density  $\piP(\theta)$.
If there exists a non-constant positive superharmonic function $f(\theta)$
with respect to the predictive metric $\gxy{}{}$,
then the Bayesian predictive density based on the prior density $\{f(\theta)\}^{2c} \piP (\theta)$
$(0 < c < 1)$
asymptotically dominates
$\py_{\mathrm P}(y \mid x^N)$.
\end{cor}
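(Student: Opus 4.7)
The plan is to apply Theorem \ref{maintheorem} twice, choosing $\pi$ so that $(\pi/\piP)^{1/2}$ is exactly the function whose Laplacian's sign we control by hypothesis. Since Theorem \ref{maintheorem} reduces the $N^2$-rescaled risk difference to the single quantity $2\,\Deltaxy(\pi/\piP)^{1/2}/(\pi/\piP)^{1/2} + \oo(1)$, both claims come down to exhibiting a positive function $h$ on $\Theta$ with $\Deltaxy h \le 0$ everywhere and $\Deltaxy h < 0$ at some point, then identifying the prior whose ratio against $\piP$ is $h^2$.

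For the first claim I would take $\pi(\theta) := \{f(\theta)\}^{2}\,\piP(\theta)$, so that $h = f$. The hypothesis on $f$ is exactly the required sign condition on $\Deltaxy f$, and Theorem \ref{maintheorem} immediately yields that the Bayesian predictive density based on this prior asymptotically dominates $\py_{\mathrm P}(y \mid x^N)$.

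For the second claim I would take $\pi(\theta) := \{f(\theta)\}^{2c}\,\piP(\theta)$, so that $h = f^{c}$. A direct chain-rule computation, using the coordinate formula for $\Deltaxy$ stated just before Theorem \ref{maintheorem}, gives
\[
\Deltaxy(f^{c}) \;=\; c\, f^{c-1}\, \Deltaxy f \;+\; c(c-1)\, f^{c-2} \sum_{i,j} \gxy{}{ij}\, \partial_i f\, \partial_j f.
\]
For $0 < c < 1$ both summands on the right are $\le 0$: the first because $c > 0$, $f > 0$, and $\Deltaxy f \le 0$; the second because $c(c-1) < 0$, $f^{c-2} > 0$, and the quadratic form is non-negative as the inverse predictive metric $(\gxy{}{ij})$ is positive definite. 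Hence $\Deltaxy(f^{c}) \le 0$ on $\Theta$. To upgrade this to strict inequality at some point I would invoke non-constancy of $f$: on any connected component of $\Theta$ on which $f$ varies, there is a $\theta_0$ with $\nabla f(\theta_0) \neq 0$, and at such a point the second summand above is strictly negative. A second application of Theorem \ref{maintheorem} then gives the desired asymptotic dominance over $\py_{\mathrm P}(y \mid x^N)$.

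I do not foresee any real obstacle. Theorem \ref{maintheorem} carries all of the asymptotic analysis, and the remaining content is the routine chain-rule identity for $\Deltaxy(f^{c})$, the sign bookkeeping for $c \in (0,1)$, and the standard fact that a smooth non-constant function on a connected manifold must have a nontrivial gradient somewhere. The only mild care needed is consistency with the sign convention for $\Deltaxy$ adopted in \eqref{deflaplacian} and with the proportionality remark that precedes the corollary, which lets us multiply $\pi$ by any positive constant without affecting the posterior.
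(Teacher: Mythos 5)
Your proposal is correct and follows essentially the same route as the paper: the first claim is read off directly from Theorem \ref{maintheorem} with $\pi = f^2\piP$, and the second claim rests on the identity $\Deltaxy f^{c} = c f^{c-1}\Deltaxy f + c(c-1) f^{c-2}\sum_{i,j}\gxy{}{ij}\partial_i f\,\partial_j f$, whose second term is strictly negative wherever the gradient of the non-constant $f$ does not vanish — exactly the paper's argument. Your extra remark about connected components is a harmless elaboration of the paper's one-line observation that such a point exists because $f$ is non-constant.
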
\vspace{-0.2cm}
\begin{proof}
The first statement
is a straightforward conclusion from Theorem \ref{maintheorem}.
We show the second statement.
The function
$\{ f ( \theta)\}^c \, (0 < c < 1)$ is superharmonic
because
$\Delta f^c = cf^{ -(1-c)} \{ \Delta f - (1 - c) f^{-1} g^{ij} \partial_i f \partial_j f \} \le 0$
if $f(\theta)$ is a positive superharmonic function.
The strict inequality holds at $\theta$ satisfying $\partial_i f(\theta) \neq 0$ for any $i$.
Such $\theta$ exists since $f(\theta)$ is a non-constant function.
Thus, the second statement follows from the first statement.
\end{proof}
\vspace{-0.2cm}

By setting $c = 1/2$, it follows from Corollary \ref{usefulcor} that
the Bayesian predictive density based on the prior $f(\theta) \pi_\mathrm{P}(\theta)$
asymptotically dominates the Bayesian predictive density based on $\pi_\mathrm{P}$
if $f(\theta)$ is a non-constant positive superharmonic function.

Note that Corollary \ref{usefulcor} also holds if we replace the predictive metric $\mathring{g}$ with another
metric $\mathring{g}'$ satisfying $\mathring{g}' = c \mathring{g}$ with a positive constant $c$.
This is because the volume element with respect to $\mathring{g}'$ is proportional to
that with respect to $\mathring{g}$
and the relation $\mathring{\Delta}' f = (1/c) \mathring{\Delta} f$ holds,
where $\mathring{\Delta}'$ is the Laplacian with respect to
$\mathring{g}'$.

\section{Examples}
In this section, we see three examples.
We verify that the results in the previous sections are consistent with several known results in Examples 1 and 2
and obtain some new results in Examples 2 and 3.

\vspace{0.1cm}
\noindent
{Example 1. Normal models}

Suppose that $x$ is distributed according to the $d$-dimensional normal distribution $\mathrm{N}_d (\mu, \Sigma)$
with mean vector $\mu$ and covariance\vadjust{\goodbreak} matrix $\Sigma=(\Sigma^{ij})$ and that $y$ is distributed\ according to
the $d$-dimensional normal distribution $\mathrm{N}_d (\mu, \tilde{\Sigma})$ with the same mean vector $\mu$ and
possibly different covariance matrix $\tilde{\Sigma}=(\tilde{\Sigma}^{ij})$.
Here, $\mu$ is the unknown parameter and $\Sigma$ and $\tilde{\Sigma}$ are known.

The Fisher information matrix for $p(x \mid \mu)$
is $(\gx{ij}{}) = (\Sigma_{ij})$
and that for $p(x \mid \mu)$ is $(\gy{ij}{}) = (\tilde{\Sigma}_{ij})$,
where $(\Sigma_{ij})$ and $(\tilde{\Sigma}_{ij})$ are inverse matrices
of $(\Sigma_{ij})$ and $(\tilde{\Sigma}_{ij})$, respectively.

Since the coefficients of the predictive metric
$\gxy{ij}{} = \sum_{k, l}  \gx{ik}{} \gy{}{kl} \gx{jl}{}$
do not depend on $\mu$,
the volume element with respect to the predictive metric is
\[
 \piP(\mu) \dd \mu = \left| \gxy{ij}{} \right| \dd \mu \propto \dd \mu,
\]
which is the uniform distribution $\pi_\mathrm{U}(\mu) \propto 1$.

\citet{KK08jmva} and \cite{GX08et}
considered shrinkage pri-\break ors for this model.
The Bayesian predictive density $\py_\pi(y \mid x^N)$ dominates
$\py_\mathrm{U}(y \mid x^N)$ based on the uniform measure $\pi_\mathrm{U}(\mu)$
if $\pi(\mu)$ is a superharmonic function on the Euclidean space $\mathbb{R}^d$
endowed with the metric $\big( ( N \, \gx{}{})^{-1} - ( N \gx{}{} + \gy{}{})^{-1} \big)^{-1}$,
see Theorem 3.2 in \citet{KK08jmva}.
This result holds for every positive integer $N$.

Since
\begin{align*}
\big( ( N \gx{}{})^{-1} - ( N \gx{}{} + \gy{}{})^{-1} \big)^{-1}
=& (N\gx{}{})^{\frac{1}{2}}
\left[ I - \left\{  I + (N\gx{}{})^{ - \frac{1}{2}} \gy{}{} ( N\gx{}{} ) ^{-\frac{1}{2}} \right\}^{-1} \right]^{-1}
(N\gx{}{})^{\frac{1}{2}} \\[2pt]
=& (N\gx{}{})^{\frac{1}{2}}
\left[ I - I + (N\gx{}{})^{ -\frac{1}{2}} \gy{}{} ( N\gx{}{})^{ -\frac{1}{2}} + \mathrm{O}(N^{-2}) \right]^{-1}
(N\gx{}{})^{\frac{1}{2}}   \\[2pt]
=& N^2 \, \gx{}{} \, \gy{}{-1} \, \gx{}{} + \mathrm{O}(N)
\end{align*}
corresponds to the predictive metric $\gxy{}{}$,
Theorem \ref{maintheorem} is consistent with theoretical and numerical results in \citet{KK08jmva} and \cite{GX08et}.

\vspace{0.3cm}
\noindent
{Example 2. Location-scale models}

Suppose that $\phi(x)$ and $\tphi(y)$ are probability densities on $\mathbb{R}$ that are symmetric about the origin.
Let
\begin{align*}
p(x \mid \mu, \sigma) \dd x :=& \frac{1}{\sigma } \phi \left(\frac{x - \mu}{\sigma} \right) \dd x ~~~ \mbox{and} ~~~~
\py(y \mid \mu, \sigma) \dd y := \frac{1}{\sigma } \tphi \left(\frac{y - \mu}{\sigma} \right) \dd y,
\end{align*}
where $\mu \in \mathbb{R}$ and $\sigma > 0$ are unknown parameters.

Suppose that we have a set of $N$ independent observations $x(1),\ldots,x(N)$ distributed according to $p(x \mid \mu, \sigma)$.
The variable $y$ to be predicted is independently distributed according to $\py(y \mid \mu, \sigma)$.
The objective is to construct a prior $\pi$ for a Bayesian predictive density $\py_\pi(y \mid x)$.

The Fisher--Rao metrics on the model manifolds $\{p(x \mid \mu, \sigma)\}$ and $\{\py(y \mid \mu, \sigma)\}$ are
\begin{align*}
\gx{\mu \mu}{} =& \frac{a}{\sigma^2}, ~~ \gx{\sigma \sigma}{} = \frac{b}{\sigma^2}, ~~ \gx{\mu \sigma}{} = 0,\\
\gy{\mu \mu}{} =& \frac{\tilde{a}}{\sigma^2}, ~~ \gy{\sigma \sigma}{} = \frac{\tilde{b}}{\sigma^2},  ~ \text{and} ~ \gy{\mu \sigma}{} = 0,
\end{align*}
respectively,
where $a$ and $b$ are positive constants depending on $\phi(x)$,
and $\tilde{a}$ and $\tilde{b}$ are positive constants depending on $\tphi(y)$.

The predictive metric is given by
\begin{align*}
\gxy{\mu \mu}{} =& \frac{a^2/\tilde{a}}{\sigma^2}, ~~ \gxy{\sigma \sigma}{} = \frac{b^2/\tilde{b}}{\sigma^2},  ~~ \text{and} ~~~ \gxy{\mu \sigma}{} = 0.
\end{align*}
Define
\begin{align}
\label{rescale}
u := \sqrt{ \frac{\tilde{b}}{\tilde{a}}} \frac{a}{b} \mu, ~~~ v := \sigma
\end{align}
by rescaling the location parameter $\mu$.
We call this coordinate system $(u,v)$ the upper-half plane coordinates.
Then, the predictive metric is represented by
\begin{align*}
\gxy{uu}{} &= \frac{b^2/\tilde{b}}{v^2}, ~~~ \gxy{vv}{} = \frac{b^2 / \tilde{b}}{v^2},
~\mbox{and} ~~ \gxy{uv}{} = 0,
\end{align*}
coinciding with the metric on the Hyperbolic plane $H^2 (-\tilde{b}/b^2)$,
which is a 2-dimensional complete manifold with constant sectional curvature $- \tilde{b}/b^2$.
Thus, the model manifold endowed with the predictive metric $\gxy{}{}$ is isometric to $H^2 (-\tilde{b}/b^2)$.

The volume element with respect to the predictive metric $\gxy{}{}$ is given by
\[
\piP(\mu,\sigma) \dd \mu \dd \sigma = |\gxy{}{}|^{1/2} \dd \mu \dd \sigma \propto \frac{1}{\sigma^2} \dd \mu \dd \sigma
\]
and coincides with the Jeffreys priors
$|\gx{}{}|^{1/2} \dd \mu \dd \sigma \propto 1/\sigma^2 \dd \mu \dd \sigma$
for $p(x \mid \mu, \sigma)$ and $|\gy{}{}|^{1/2} \dd \mu \dd \sigma \propto 1/\sigma^2 \dd \mu \dd \sigma$
for $\py(y \mid \mu, \sigma)$.

The Laplacian on the model manifold endowed with the predictive metric $\gxy{}{}$ is given by
\begin{align}
\Deltaxy = \sigma^2 \left(\frac{\tilde{a}}{a^2}
\frac{\partial^2}{\partial \mu^2} + \frac{\tilde{b}}{b^2} \frac{\partial ^2}{\partial \sigma^2} \right).
\label{laplacian-org}
\end{align}

By Corollary \ref{usefulcor},
the Bayesian predictive density $\py_\mathrm{R}(y \mid x)$ based on the prior
\[
 \pi_\mathrm{R}(\mu,\sigma) \dd \mu \dd \sigma \propto \frac{1}{\sigma} \dd \mu \dd \sigma
\]
asymptotically dominates $\py_\mathrm{P}(y \mid x)$ based on $\piP$ because
\[
\Deltaxy \frac{\pi_\mathrm{R}(\mu,\sigma)}{\piP(\mu,\sigma)} =
\Deltaxy \frac{1/\sigma}{1/\sigma^2} = \Deltaxy \sigma
= \sigma ^2 \left(\frac{\tilde{a}}{a^2} \frac{\partial ^2}{\partial \mu^2} + \frac{\tilde{b}}{b^2} \frac{\partial ^2}{\partial \sigma ^2} \right) \sigma = 0.
\]
By Theorem \ref{maintheorem}, the asymptotic risk difference is
\begin{align}
N^2 \biggl[ \E \big\{ & D (\py(y \mid \theta), \py_\mathrm{R}(y \mid x^N) \, \big| \, \theta \big\}
- \E \big\{ D (\py(y \mid \theta), \py_\mathrm{P}(y \mid x^N)  \, \big| \, \theta \big\} \biggr] \notag\\
=& 2 \; \frac{\displaystyle \Deltaxy \left(\frac{\pi_\mathrm{R}}{\piP}\right)^{\frac{1}{2}}}
{\displaystyle \left( \frac{\pi_\mathrm{R}}{\piP} \right)^{\frac{1}{2}}}
+ \oo(1)
= 2 \; \frac{\Deltaxy \sigma^{\frac{1}{2}}}
{\sigma^{\frac{1}{2}}}
+ \oo(1)
= - \frac{\tilde{b}}{2 b^2} + \oo(1).
\label{riskr}
\end{align}

In fact, it can be shown that the Bayesian predictive density $\py_\mathrm{R}(y \mid x)$ exactly dominates
$\py_\mathrm{P}(y \mid x)$ for finite $N$
because $\piP$ is the left invariant prior and $\pi_\mathrm{R}$ is the right invariant prior
with respect to the location-scale group.
The Bayesian procedures based on the right invariant prior
dominate those based on the left invariant prior
in many problems associated with group models as shown in \citet{Z68aism}.
The prior $\pi_\mathrm{R}$ is also derived as a reference prior, see \cite{BB92bs}.

Furthermore,  as we see below, the Bayesian predictive density $\py_{c,\kappa}(y \mid x)$ based on the prior $\pi_{c,\kappa}$ defined by
\begin{align}
 \frac{\pi_{c, \kappa}}{\piP}(\mu, \sigma)
:=& \frac{2 \kappa \sigma}
{\displaystyle \frac{a^2 \tilde{b}}{b^2 \tilde{a}} \mu^2 + c(\sigma + \kappa)^2 + (1-c)(\sigma^2 + \kappa^2)}
~~~~ (0 \leq c \leq 1, ~ 0 < \kappa < \infty)
\label{hs}
\end{align}
asymptotically dominates $\py_\mathrm{R}(y \mid x)$
and thus also dominates $\py_\mathrm{P}(y \mid x)$.

To clarify the meaning of the prior $\pi_{c,\kappa}$,
we introduce another coordinate system on the model manifold.
Let $(b/\sqrt{\tilde{b}}) \rho$ be the Riemannian distance based on the predictive metric $\gxy{}{}$ between
a point $\mathrm{P}$ and an arbitrary fixed point $\mathrm{O}$ on $H^2(-\tilde{b}/b^2)$.
The direction of $\mathrm{P}$ from $\mathrm{O}$ is represented by a point $\tau$ on the unit circle in the tangent space at $\mathrm{O}$.
Then, the point $\mathrm{P}$ is represented by $\rho$ and $\tau$,
see e.g.\ \cite{H84ap} p.~152.
This coordinate system $(\rho,\tau)$ is called the geodesic polar coordinates.
Then, the predictive metric is given by
\[
\gxy{\rho \rho}{} = \frac{b^2}{\tilde{b}}, ~~~
\gxy{\tau \tau}{} = \frac{b^2}{\tilde{b}}(\sinh \rho)^2, ~\mbox{and}~~
\gxy{\rho \tau}{} = 0.
\]
The Laplacian is represented by
\begin{align}
\label{polarlaplacian}
\Deltaxy = \frac{\tilde{b}}{b^2} \left\{ \frac{\partial^2}{\partial \rho^2}
+ \frac{ \cosh \rho}{\sinh \rho} \frac{ \partial}{\partial \rho} + (\sinh \rho)^{-2} \Deltaxy_\mathrm{S} \right\},
\end{align}
where $\Deltaxy_\mathrm{S}$ is the Laplacian on the unit circle in the tangent space at $\mathrm{O}$,
see e.g. \cite{H84ap} p.~158.

When the upper-half plane coordinate system is adopted,
the Riemannian distance $(b/\sqrt{\tilde{b}}) \rho$ between $(u,v)$ and $(\bar{u},\bar{v})$ is represented by
\[
\cosh \rho = \frac{ | u - \bar{u} |^2 + v^2 + \bar{v}^2 }{2 v \bar{v}},
\]
see e.g.\ \cite{D89cambridge} p. 176.
Thus,
in the original coordinate system $(\mu,\sigma)$, the\break Riemannian distance $(b/\sqrt{\tilde{b}}) \rho$ between
and $(\mu, \sigma)$ and $(0, \kappa)$ is
\begin{align}
\label{coshrho}
\cosh \rho = \frac{\frac{a^2\tilde{b}}{b^2\tilde{a}} \mu^2 + \sigma^2 + \kappa^2}{2\sigma \kappa}.
\end{align}
Thus, the ratio of prior densities is given by
\begin{align}
\frac{\pi_{c,\kappa}(\mu,\sigma)}{\pi_\mathrm{P}(\mu,\sigma)}
\label{ckapparatio-org}
=& \frac{1}{\displaystyle \frac{\frac{a^2\tilde{b}}{b^2\tilde{a}} \mu^2 + \sigma^2 + \kappa^2}{2\sigma \kappa}+c} \\
=& \frac{1}{\cosh \rho + c}.
\label{ckapparatio}
\end{align}
Note that $\pi_{c,\kappa}(\mu,\sigma)/\pi_\mathrm{P}(\mu,\sigma)$
depends on $(\mu,\sigma)$ only through $\rho(\mu,\sigma)$ defined by \eqref{coshrho}.
Thus, from \eqref{polarlaplacian}, \eqref{ckapparatio}, and Theorem \ref{maintheorem}, we have
\begin{align}
N^2 \biggl[ \E \big\{ & D (\py(y \mid \theta), \py_{\pi_{c,\kappa}}(y | x ) \, \big| \, \theta \big\}
- \E \big\{ D (\py(y \mid \theta), \py_\mathrm{P}(y \mid x )  \, \big| \, \theta \big\} \biggr] \notag\\
=& 2 \; \frac{\displaystyle \Deltaxy \left(\frac{\pi_{c,\kappa}}{\piP}\right)^{\frac{1}{2}}}
{\displaystyle \left( \frac{\pi_{c,\kappa}}{\piP} \right)^{\frac{1}{2}}}
+ \oo(1)
= - \frac{\tilde{b}}{b^2}
\left\{
\frac{1}{2} + c \frac{\pi_{c,\kappa}}{\piP} + \frac{3}{2} (1-c^2) \left(\frac{\pi_{c,\kappa}}{\piP} \right)^2 \right\}
+ \oo(1) \notag \\
=& - \frac{\tilde{b}}{b^2} \left\{ \frac{1}{2} + c \frac{1}{\cosh \rho + c} + \frac{3}{2} ( 1 - c^2) \frac{1}{(\cosh \rho + c)^2} \right\}
+ \oo(1),
\label{riskhs}
\end{align}
and \eqref{riskhs} is smaller than \eqref{riskr}
when $0 \leq c < 1$ and $0 < \kappa < \infty$.
The asymptotic risk difference \eqref{riskhs} can also be derived from \eqref{ckapparatio-org}
and the Laplacian \eqref{laplacian-org} in the original coordinate system.

By Corollary \ref{usefulcor},
the Bayesian predictive density $\py_{c,\kappa}(y \mid x^N)$
$(0 \leq c < \infty, 0 < \kappa < \infty)$
asymptotically dominates $\py_\mathrm{P}(y \mid x^N)$
since the function \eqref{hs} is superharmonic for $0 \leq c < \infty$.
However, $\py_{c,\kappa}(y \mid x^N)$
asymptotically dominates $\py_\mathrm{R}(y \mid x^N)$ only when $0 \leq c \leq 1$.
\begin{figure}[ht]
  \includegraphics{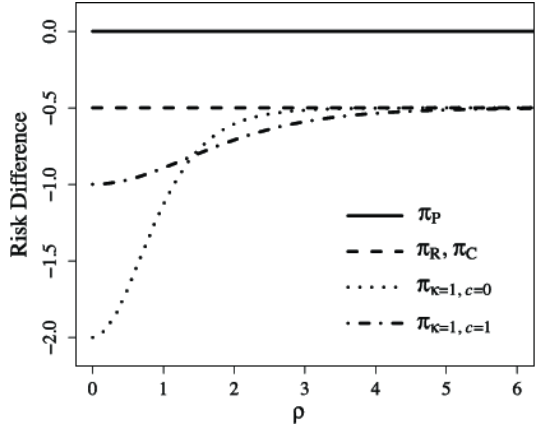}
  \caption{The asymptotic risk difference
$N^2[\E\{ D(\py(y \mid \theta), \py_{c,\kappa}(y \mid x^N) \mid \theta \}
- \E \{ D(\py(y \mid \theta), \py_\mathrm{P}(y \mid x^N) \mid \theta \}] + \oo(1)
= -(\tilde{b}/b^2)
\{
1/2 + c (\pi/\piP) + (3/2) (1-c^2) (\pi/\piP) \}^2$
for Bayesian predictive densities based on $\pi_\mathrm{P}$, $\pi_\mathrm{R}$, $\pi_\mathrm{C}$,
$\pi_{\kappa = 1, c= 0}$, and $\pi_{\kappa = 1, c= 1}$.
We put $\tilde{b}/b^2 = 1$ just for simplicity.}
  \label{fig:location-scale-risk}
\end{figure}

Several properties of the function \eqref{hs} are discussed in \citet{K07aism}.
As $\kappa \to \infty$, the prior $\pi_{c, \kappa}$ converges to the right invariant prior $\pi_\mathrm{R}$,
because
\begin{align*}
\frac{\pi_{c,\kappa}(\mu,\sigma)}{\pi_\mathrm{P}(\mu,\sigma)} =
\frac{1}{\displaystyle \frac{\frac{a^2\tilde{b}}{b^2\tilde{a}} \mu^2 + \sigma^2 + \kappa^2}{2\sigma \kappa}+c}
\propto
\frac{\kappa/2}{\displaystyle \frac{\frac{a^2\tilde{b}}{b^2\tilde{a}} \mu^2 + \sigma^2 + \kappa^2}{2\sigma \kappa}+c}
\rightarrow
\sigma
\end{align*}
when $\kappa \to \infty$.
Here, priors are identified up to a positive multiplicative constant.
As $\kappa \to 0$, the prior $\pi_{c, \kappa}$ converges to
\begin{align*}
\pi_\mathrm{C}(\mu,\sigma) \dd \mu \dd \sigma& :=
\frac{\sigma}{ \frac{a^2 \tilde{b}}{b^2 \tilde{a}} \mu^2 + \sigma^2} \frac{1}{\sigma^2} \dd \mu \dd \sigma
= \frac{\sigma^{-1}}{ \frac{a^2 \tilde{b}}{b^2 \tilde{a}}( \frac{ \mu}{\sigma})^2 +1} \frac{1}{\sigma^2} \dd \mu \dd \sigma,
\end{align*}
because
\begin{align*}
\frac{\pi_{c,\kappa}(\mu,\sigma)}{\pi_\mathrm{P}(\mu,\sigma)} =
\frac{1}{\displaystyle \frac{\frac{a^2\tilde{b}}{b^2\tilde{a}} \mu^2 + \sigma^2 + \kappa^2}{2\sigma \kappa}+c}
\propto
\frac{1/(2\kappa)}{\displaystyle \frac{\frac{a^2\tilde{b}}{b^2\tilde{a}} \mu^2 + \sigma^2 + \kappa^2}{2\sigma \kappa}+c}
\rightarrow
\frac{\sigma}{\frac{a^2 \tilde{b}}{b^2 \tilde{a}} \mu^2 + \sigma^2}
\end{align*}

\noindent when $\kappa \to 0$.
The prior density with respect to the rescaled parameter $(u,v)$ defined by \eqref{rescale} is given by
\begin{align}
 \pi_\mathrm{C}(\mu,\sigma) \dd \mu \dd \sigma
\propto \frac{v^{-1}}{(u/v)^2 + 1} \frac{1}{v^2} \dd u \dd v.
\label{limit0}
\end{align}
Note that the Cauchy prior for $u$, discussed by Jeffreys and many researchers, appears in \eqref{limit0}.
Thus, the class $\pi_{c, \kappa}$
of priors bridges the right invariant prior $\pi_\mathrm{R}$,
coinciding with the reference prior, and the Cauchy prior $\pi_\mathrm{C}$.

Figure \ref{fig:location-scale-risk} illustrates the difference between the risk functions of Bayesian predictive densities based on
$\pi_\mathrm{R}$, $\pi_\mathrm{C}$, $\pi_{\kappa=1, c=0}$, and $\pi_{\kappa=1, c =1}$
and the risk function of $\py_\mathrm{P}(y \mid x^N)$.
The risk functions of the right invariant prior $\pi_\mathrm{R}$ and the Cauchy prior $\pi_\mathrm{C}$
are uniformly smaller than that of $\pi_\mathrm{P}$.
The asymptotic risk of the Cauchy prior $\pi_\mathrm{C}$ coincides with that of $\pi_\mathrm{R}$.
Furthermore, the asymptotic risks of $\pi_{\kappa =1, c=0} $ and $\pi_{\kappa= 1, c=1}$ are smaller than that of
$\pi_\mathrm{R}$ for every $(\mu, \sigma)$.
Therefore, the use of $\pi_{\kappa=1, c}$ $(0 \leq c \leq 1)$ is recommended.
The risk of $\pi_{\kappa=1, c=0}$ is smaller than that of $ \pi_{\kappa=1, c=1}$ when $\rho$ is small,
and vice versa.
Thus, there does not exist a unique best value of $c$.
The choice of the value of $0 < \kappa < \infty$ is arbitrary because it corresponds to the center of shrinkage.
Finite-sample decision theoretic properties such as admissibility of Bayesian predictive densities
$\py_{\kappa,c}(y \mid x^N)$
based on proposed priors $\pi_{\kappa, c}$ $(0 < \kappa < \infty,~ 0 \leq c \leq 1)$ require further research.

\vspace{0.3cm}
\noindent
{Example 3. Poisson models}

Suppose that $x_i$ $(i=1,\ldots,d)$ are independently distributed
according to the Poisson distribution $\mbox{Po}(\lambda_i)$ with mean $\lambda_i$
and that $y_i$ $(i=1,\ldots,d)$ are independently distributed
according to the Poisson distribution $\mbox{Po}(s_i \lambda_i)$ with mean $s_i \lambda_i$.
Here, $s_i$ are known positive constants.
The unknown parameter is $\theta = (\theta^1, \ldots, \theta^d) := (\lambda_1, \ldots, \lambda_d)$.
The objective is to construct a predictive density for $y$ by using $x$.
This problem in the conventional setting, in which $s_1 = s_2 = \cdots = s_d$,
is studied in \cite{K04as}.

If $s_i \ll 1$ for each $i$, then this prediction problem is in the asymptotic setting.
The Fisher--Rao metrics corresponding to $x$ and $y$ are given by
\[
\gx{ij}{} =
\left\{
\begin{array}{cc}
\displaystyle \frac{1}{\lambda_i} & (i=j) \\[0.3cm]
0 & (i \neq j)
\end{array}
\right. ~~~ \mbox{and} ~~~~~
\gy{ij}{} =
\left\{
\begin{array}{cc}
\displaystyle \frac{s_i}{\lambda_i} & (i=j) \\[0.3cm]
0 & (i \neq j)
\end{array}
\right.,
\]
respectively.
The predictive metric is
\begin{align}
\label{poissonpm}
\gxy{ij}{} =
\left\{
\begin{array}{cc}
\displaystyle \frac{1}{s_i \lambda_i} & (i=j) \\[0.3cm]
0 & (i \neq j)
\end{array}
\right.,
\end{align}
and the corresponding volume element is
\[
\piP(\lambda) \dd \lambda :=
|\gxy{}{}|^{1/2} \dd \lambda
= \left\{\prod_{i=1}^d \frac{1}{(s_j \lambda_j)^{1/2}}\right\} \dd \lambda
\propto \frac{1}{(\lambda_1 \cdots \lambda_d)^{1/2}} \dd \lambda
\]
coinciding with the Jeffreys priors for $p(x \mid \lambda)$ and $\py(y \mid \lambda)$.

The Laplacian $\Deltaxy{}{}$ based on the predictive metric $\gxy{}{}$ is given by
\[
 \Deltaxy f = \left(\prod_{k=1}^d \lambda_k^{1/2}\right) \sum_{i=1}^d \frac{\partial}{\partial \lambda_i}
\left(\frac{s_i \lambda_i}{\prod_{j=1}^d \lambda_j^{1/2}} \frac{\partial f}{\partial \lambda_i} \right)
= \sum_i s_i \left( \lambda_i \frac{\partial^2 f}{\partial \lambda_i^2} + \frac{1}{2} \frac{\partial f}{\partial \lambda_i} \right),
\]
where $f$ is a smooth real function of $\lambda$.

Define
\[
\pi_\mathrm{S}(\lambda) \dd \lambda := \frac{(\lambda_1/s_1 + \cdots + \lambda_d/s_d)^{-(d/2-1)}}{\prod_j {\lambda_j}^{1/2}} \dd \lambda
\propto (\lambda_1/s_1 + \cdots + \lambda_d/s_d)^{-(d/2-1)} |\gxy{}{}|^{1/2} \dd \lambda.
\]
Then, from
\begin{align}
\label{1stderiv}
\frac{ \partial}{\partial \lambda_i} \frac{\pi_\mathrm{S}(\lambda)}{\pi_\mathrm{P}(\lambda)}
= \left( - \frac{d}{2} +1 \right) \left( \frac{ \lambda_1}{s_1}
+ \dotsb + \frac{\lambda_d}{s_d} \right) ^{ -\frac{d}{2}} \frac{1}{s_i}
\end{align}
and
\[
\frac{\partial^2}{\partial \lambda^2_i} \frac{\pi_\mathrm{S}(\lambda)}{\pi_\mathrm{P}(\lambda)}
= \left( - \frac{d}{2} + 1 \right) \left( - \frac{d}{2} \right)
\left(\frac{ \lambda_1}{s_1} + \dotsb + \frac{\lambda_d}{s_d} \right)^{-\frac{d}{2}-1} \left( \frac{1}{s_i} \right)^2,
\]
we have
\begin{align}
\label{harmonicpoisson}
\Deltaxy{}{}
\frac{\pi_\mathrm{S}(\lambda)}{\pi_\mathrm{P}(\lambda)}
= \sum_i s_i \left( \lambda_i \frac{ \partial^2 }{\partial \lambda_i^2}
\frac{\pi_\mathrm{S}(\lambda)}{\pi_\mathrm{P}(\lambda)}
+ \frac{1}{2} \frac{\partial}{\partial \lambda_i}
\frac{\pi_\mathrm{S}(\lambda)}{\pi_\mathrm{P}(\lambda)}
\right) = 0.
\end{align}
Since $\pi_\mathrm{S}/\pi_\mathrm{P}$ is a non-constant positive superharmonic function of $\lambda$,
the Bayesian predictive density $\py_\mathrm{S}(y \mid x)$ based on $\pi_\mathrm{S}$
asymptotically dominates $\py_\mathrm{P}(y \mid x)$
by Corollary~\ref{usefulcor}.

The model manifold endowed with the predictive metric $\gxy{ij}{}$ is isometric to the first orthant
$\mathbb{R}^n_+ = \{(x^1, \dotsb, x^n): x^1 > 0, x^2 > 0 , \dotsb, x^n > 0 \}$
of the Euclidean space $\mathbb{R}^n$, as we see below.
Define
\[
 \xi^{i'}
= 2 \sqrt{\frac{\theta^{i'}}{s_{i'}}} ~~~ (i' = 1,\ldots,d).
\]
Then,
\begin{align*}
\frac{ \partial \theta^i}{\partial \xi^{i'}} & =
\begin{cases}
(s_{i'} \lambda_{i'} )^{\frac{1}{2}} & (i = i') \\
0 & (i \neq i').
\end{cases}
\end{align*}
Thus, from \eqref{poissonpm},
the coefficients of the metric with respect to $(\xi^{i'})$ are given by
\begin{align*}
\gxy{i' j'}{}&= \sum_{i,j} \frac{\partial \theta^i}{\partial \xi^{i'}} \gxy{ij}{} \frac{\partial \theta^j}{\partial \xi^{j'}} =
\begin{cases} 1 & (i' = j') \\ 0 & (i' \neq j').
\end{cases}
\end{align*}
This coincides with the usual metric on $\mathbb{R}^n_+$.

Here, the function
\[
\| \xi \|^{-d+2} \propto
\frac{\pi_\mathrm{S}(\lambda)}{\pi_\mathrm{P}(\lambda)}
= \left(  \frac{\lambda_1}{s_1} + \dotsb + \frac{ \lambda_d}{s_d} \right)^{- \frac{d}{2}+1}
\]
of $\xi$
is the Green function of the heat equation on $\mathbb{R}^n$
and plays an essential role in Bayesian methods for model manifolds isometric to the Euclidean space.
For example, the prior density $\| \mu \|^{-d+2} $ for the  $d$-dimensional
Normal model $N_d ( \mu, I_d)$, where $\mu$ is the $d$-dimensional unknown mean vector
and $I_d$ is the $ d \times d $ identity matrix, is known as the Stein prior.

The Bayesian predictive density based on $\piP$ is
\begin{align*}
\py_\mathrm{P}(y \mid \ x) =&
\frac{\displaystyle \int \prod_{i=1}^d \left\{\frac{{\lambda_i}^{x_i}}{x_i!} \ee^{-\lambda_i}
\frac{(s_i \lambda_i)^{y_i}}{y_i!} \ee^{-s_i \lambda_i} \right\}
\prod_{j=1}^d {\lambda_j}^{-1/2} \dd \lambda}
{\displaystyle \int \prod_{i=1}^d \left(\frac{{\lambda_i}^{x_i}}{x_i!} \ee^{-\lambda_i}\right)
\prod_{j=1}^d {\lambda_j}^{-1/2} \dd \lambda} \\
=&
\frac{\displaystyle \prod_{i=1}^d
\left\{\frac{s_i^{y_i}}{(1+s_i)^{x_i+y_i+1/2}}
\frac{\Gamma(x_i + y_i + 1/2)}{x_i! y_i!}\right\}}
{\displaystyle \prod_{i=1}^d \frac{\Gamma(x_i + 1/2)}{x_i!}} \\
=&
\displaystyle \prod_{i=1}^d
\left\{\frac{s_i^{y_i}}{(1+s_i)^{x_i+y_i+1/2}}
\frac{\Gamma(x_i + y_i + 1/2)}{\Gamma(x_i + 1/2) y_i!}\right\},
\end{align*}
where $\dd \lambda := \dd \lambda_1 \cdots \dd \lambda_d$.

The Bayesian predictive density based on $\pi_\mathrm{S}$ is
\begin{align*}
\py_\mathrm{S}(y \mid  x) &=
\frac{\displaystyle \int \prod_{i=1}^d \left\{ \frac{{\lambda_i}^{x_i}}{x_i!} \ee^{-\lambda_i}
\frac{(s_i \lambda_i)^{y_i}}{y_i!} \ee^{- s_i \lambda_i} \right\}
\biggl(\sum_{j=1}^d \frac{\lambda_j}{s_j} \biggr)^{-(d/2-1)}
\prod_{k=1}^d {\lambda_k}^{-1/2} \dd \lambda}
{\displaystyle \int \prod_{i=1}^d
\left(\frac{{\lambda_i}^{x_i}}{x_i!} \ee^{- \lambda_i} \right)
\biggl(\sum_{j=1}^d \frac{\lambda_j}{s_j} \biggr)^{-(d/2-1)}
\prod_{k=1}^d {\lambda_k}^{-1/2} \dd \lambda}\\
&=
\frac{\displaystyle \int \prod_{i=1}^d \left\{
 \frac{s_i^{y_i} \lambda_i^{x_i+y_i-1/2}}{y_i!} \ee^{- (1+s_i) \lambda_i} \right\}
 \left\{\int_0^\infty u^{\frac{d}{2}-2}
 \exp \Bigl( - u \sum_j \frac{\lambda_j}{s_j} \Bigr) \dd u \right\}
 \dd \lambda}
{\displaystyle \int \prod_{i=1}^d
 \left({\lambda_i}^{x_i-1/2} \ee^{-\lambda_i} \right)
 \left\{\int_0^\infty u^{\frac{d}{2}-2}
 \exp \Bigl( - u \sum_j \frac{\lambda_j}{s_j} \Bigr) \dd u \right\}
 \dd \lambda} \\
&=
\frac{\displaystyle
\int_0^\infty u^{\frac{d}{2}-2}
 \prod_{i=1}^d
 (1+s_i+u/s_i)^{-(x_i+y_i+1/2)}
 \dd u}
{\displaystyle
 \int_0^\infty u^{\frac{d}{2}-2}
 \prod_{i=1}^d
 (1+u/s_i)^{-(x_i+1/2)}
 \dd u}
\prod_{i=1}^d \frac{s_i^{y_i} \Gamma(x_i+y_i+1/2)}{y_i! \Gamma(x_i+1/2)}.
\end{align*}

We have the asymptotic risk difference
\begin{align}
N^2 \biggl[ \E \big\{ & D (\py(y \mid \theta), \py_{\mathrm S}(y \mid x ) \, \big| \, \theta \big\}
- \E \big\{ D (\py(y \mid \theta), \py_{\mathrm P}(y \mid x )  \, \big| \, \theta \big\} \biggr] \notag\\
= & \frac{\Deltaxy (\pi_\mathrm{S}/\pi_\mathrm{P})}{\pi_\mathrm{S}/\pi_\mathrm{P}}
- \frac{1}{2} \sum_{i,j} \gxy{}{ij} \frac{ \partial_i (\pi_\mathrm{S}/\pi_\mathrm{P})
\partial_j (\pi_\mathrm{S}/\pi_\mathrm{P})}{(\pi_\mathrm{S}/\pi_\mathrm{P})^2} + \mathrm{o}(1) \notag\\
=& - \frac{1}{2}  \left( \frac{d}{2} -1\right)^2
\left( \frac{\lambda_1}{s_1} + \dotsb + \frac{ \lambda_d}{s_d} \right)^{-1} + \mathrm{o}(1)
\label{poissonard}
\end{align}
by Theorem \ref{maintheorem}, \eqref{1stderiv}, \eqref{harmonicpoisson}, and
\begin{equation*}
\gxy{}{ij} =
\begin{cases}  s_i \lambda_i & ( i=j) \\ 0 & ( i \neq j).
\end{cases}
\end{equation*}
The asymptotic risk difference \eqref{poissonard} depends on $\lambda$ only through $\lambda_1/s_1 + \dotsb + \lambda_d/s_d$.
When $\lambda_1/s_1 + \dotsb + \lambda_d/s_d$ is small the improvement is large,
and it converges to zero as $\lambda_1/s_1 + \dotsb + \lambda_d/s_d$ goes to infinity.

It can be shown that $\pi_\mathrm{S}$ dominates $\piP$ in the sense of
infinitesimal prediction, and
we can construct
a Bayesian predictive density
dominating $\tilde{p}_\mathrm{P}(y \mid x)$
for arbitrary $s_i > 0$ $(i=1,\ldots,d)$
by modifying the prior $\pi_\mathrm{S}$.
Finite sample properties of this prior will be discussed in a another paper
by using an approach different from the asymptotic methods in the present paper.

\vspace{0.3cm}

In Examples 1, 2, and 3,
the volume element based on the predictive metric $\gxy{ij}{}$
coincides with the Jeffreys priors based on $g_{ij}$ and $\tilde{g}_{ij}$,
i.e.
$|\gxy{ij}{}(\theta)|^{1/2} \propto |g_{ij}(\theta)|^{1/2} \propto |\tilde{g}_{ij}(\theta)|^{1/2}$,
although the three metrics are different.
In general, if two metrics $g_{ij}$ and $\tilde{g}_{ij}$ satisfy the relation
\begin{align}
\label{prop}
 \tilde{g}_{ij} (\theta) = \sum_{k,l}  g_{kl} (\theta)  A^k_i  A^l_j,
\end{align}
where $(A^i_j)$ is a $d \times d$ regular matrix not depending on $\theta$,
then
\[
|\tilde{g}_{ij}|^{\frac{1}{2}} = |A^l_k| |g_{ij} |^{\frac{1}{2}}, ~~ \mbox{and} ~~
|\gxy{ij}{}|^{\frac{1}{2}} = |g_{ij}| |\tilde{g}_{ij}|^{-\frac{1}{2}} = |A^l_k|^{-1} |g_{ij} |^{\frac{1}{2}}
\]
and the volume elements based on $g_{ij}$, $\tilde{g}_{ij}$, and $\gxy{ij}{}$ are proportional to each other.
The relation \eqref{prop} appears in many examples as in Examples 1, 2, and 3.

\appendix

\section*{Appendix. Proofs of Theorems \ref{riskdiff} and \ref{maintheorem}}
    \setcounter{thm}{0}
    \renewcommand{\thethm}{\Alph{section}\arabic{thm}}

First, we prepare a preliminary result, Theorem \ref{risk-invariant}, to prove Theorem \ref{riskdiff}.

Asymptotic properties of predictive densities in the conventional setting in which $x(i)$, $i = 1,\ldots,N$, and $y$ have
the same distribution have been studied, see \citet{K96biometrika}, \citet{H98as}, and \citet{SDG06as}.

\citet{FKA04sjs} generalized these results for the setting in which $x(i)$, $i= 1, \ldots, N$, and $y$
have different distributions.
The Bayesian predictive density is expanded as
\begin{align}
\py_\pi&(y \mid x^N) = \py(y \mid \hat{\theta}_{\mathrm{mle}})
+ \frac{1}{2N} \sum_{i,j} \gx{}{ij}(\htheta_{\mathrm{mle}})
\left\{ \partial_i \partial_j \py(y \mid \htheta_{\mathrm{mle}}) - \sum_k \mGammay{ij}{k} \partial_k \py(y \mid \htheta_{\mathrm{mle}}) \right\} \notag \\
& +  \frac{1}{2N} \sum_k \left[ \sum_{i,j}  \gx{}{ij}(\htheta_{\mathrm{mle}}) \left\{\mGammay{ij}{k}(\htheta_{\mathrm{mle}})
- \mGammax{ij}{k}(\htheta_{\mathrm{mle}}) \right\} \right.  \notag \\
& ~~~~~~ \left. + 2 \sum_i \gx{}{ik}(\htheta_{\mathrm{mle}})
\left\{ \partial_i \log \pi (\htheta_{\mathrm{mle}}) - \sum_j \eGammax{ij}{j}(\htheta_{\mathrm{mle}}) \right\} \right]
\partial_k \py(y \mid \htheta_{\mathrm{mle}})
+ \mathrm{o}_\mathrm{p}(N^{-1}),
\label{FKA-1}
\end{align}
where $\htheta_{\mathrm{mle}}$ is the maximum likelihood estimator, and $\partial_i := \partial/\partial \theta^i$.
The estimator\break minimizing the Bayes risk
$\int \E [D \{\py(y \mid \theta), \py_\pi(y \mid x) \} | \theta] \pi(\theta) \dd \theta$
is given by
\begin{equation}
 \hat{\theta}^i_\pi = \hat{\theta}^i_{\sf mle} + \frac{1}{N} w^i_\pi(\hat{\theta}_{\sf mle})
+ \mathrm{o}_\mathrm{p}(N^{-1}),
\label{FKA-2}
\end{equation}
where
\begin{equation}
w^i_\pi(\theta) := \sum_k \gx{}{ik}(\theta) \Bigl\{ \partial_k \log \pi(\theta) - \sum_j \eGammax{kj}{j}(\theta) \Bigr\}
+ \frac{1}{2} \sum_{k, l}  \gx{}{kl}(\theta) \Bigl\{ \mGammay{kl}{i}(\theta) - \mGammax{kl}{i}(\theta) \Bigr\},
\label{FKA-3}
\end{equation}
which is a covariant vector.

The expansion of the risk function of a Bayesian predictive density $\py_\pi(y \mid x^N)$ up to the order $N^{-2}$ is given in
Theorem \ref{risk-invariant} below.
The expansion is invariant in the sense that
each term is a scalar not depending on parametrization.
In Theorem \ref{risk-invariant}, we put
\begin{align*}
\vxe{ij}{} (x ; \theta) :=& \partial_i \partial_j \log p(x \mid \theta) + g_{ij}(\theta)
- \sum_k \eGammax{ij}{k}(\theta) \partial_k \log p(x \mid \theta), \\
\vym{ij}{} (y ; \theta) :=& \frac{1}{\py(y \mid \theta)} \Bigl\{
\partial_i \partial_j \py(y \mid \theta) - \sum_k \mGammay{ij}{k} \partial_k \py(y \mid \theta) \Bigr\}, \\
T^{ijk} :=& \sum_{l, m, n}  T_{lmn} \gx{}{il} \gx{}{jm} \gx{}{kn},
\mbox{~~and~~}
\vxe{}{ik} := \sum_{j, l}  \vxe{jl}{} \gx{}{ij} \gx{}{kl}.
\end{align*}
Here, $\vxe{ij}{}$ and $\vym{ij}{}$ are vectors orthogonal to the model manifolds
$\{p(x \mid \theta) \, | \, \theta \in \Theta)\}$ and $\{\py(y \mid \theta) \, | \, \theta \in \Theta)\}$, respectively.
These vectors are closely related to the curvature of the manifolds.

\vspace{0.3cm}

\begin{thm}
\label{risk-invariant}
The expected Kullback--Leibler divergence from the true density $\py(y \mid \theta)$
to the Bayesian predictive density $\py_\pi(y \mid x^N)$ based on a prior $\pi(\theta)$ is expanded as
\begin{align}
\notag
\E \Bigl\{ & D \bigl(\py(y \mid \theta), \py_\pi(y \mid x^N) \bigr) \, \Big| \, \theta \Bigr\} \\ \notag
= & \frac{1}{2N} \sum_{i, j}  \gy{ij}{} \gx{}{ij}
+ \frac{1}{2N^2} \sum_{i, j}  \gy{ij}{} \upi^i \upi^j
+ \frac{1}{N^2} \sum_{i, j, k}  \gy{ij}{} \gx{}{jk} \enablay{k}{} \upi^i \\ \notag
& + \frac{1}{2N^2} \sum_{i, j, k, l}  \E \left( \vxe{}{ik} \vxe{}{jl} \, \Big| \, \theta \right) \gx{kl}{} \gy{ij}{} \\
&  - \frac{1}{2 N^2} \sum_{i, j, k, l} \E \left( \vym{ij}{} \vym{kl}{} \, \Big| \, \theta \right) \gx{}{ij} \gx{}{kl}
- \frac{1}{3N^2} \sum_{i, j, k}  \Ty{ijk}{} \Tx{}{ijk} \notag \\
& + \frac{3}{4N^2} \sum_{i, j, k, l}  \Qy{ijkl}{} \gx{}{ij} \gx{}{kl}
- \frac{1}{N^2} \sum_{i, j, k, l}  \E \left( \partial_i \ly \partial_j \ly \, \vym{kl}{} \, \Big| \, \theta \right) \gx{}{ik} \gx{}{jl} \notag \\
& + \frac{1}{4N^2} \sum_{i, j, k, l}  \E \left( \vym{ij}{} \vym{kl}{} \, \Big| \, \theta \right) \gx{}{ik} \gx{}{jl} \notag \\
& + \frac{1}{4N^2} \sum_{i, j, k, l, m, n}
\gy{ij}{}  (\mGammay{kl}{i} - \mGammax{kl}{i}) (\mGammay{mn}{j} - \mGammax{mn}{j}) \gx{}{km} \gx{}{ln} \notag \\
& - \frac{1}{N^2} \sum_{i, j, k, l, m}  \Ty{ijk}{} (\mGammay{lm}{k} - \mGammax{lm}{k}) \gx{}{il} \gx{}{jm}
+ \oo(N^{-2}),
\label{thm1}
\end{align}
where
\begin{equation*}
u^i_\pi(\theta) := \sum_k \gx{}{ik}(\theta) \Bigl\{ \partial_k \log \pi(\theta) - \sum_j \eGammax{kj}{j}(\theta) \Bigr\}
+ \sum_{k, l}  \gx{}{kl}(\theta) \Bigl\{ \mGammay{kl}{i}(\theta) - \mGammax{kl}{i}(\theta) \Bigr\}.
\end{equation*}
\end{thm}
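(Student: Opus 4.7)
The plan is to combine the stochastic expansion \eqref{FKA-1} of $\py_\pi(y\mid x^N)$ with a Taylor expansion of $\log \py(y\mid\htheta_{\mathrm{mle}})$ about the true parameter $\theta$, take expectations first over $y$ and then over $x^N$, and organize the resulting $O(N^{-2})$ contributions into invariant geometric combinations matching the right-hand side of \eqref{thm1}. The starting point is to write
\[
 D\bigl(\py(y\mid\theta),\py_\pi(y\mid x^N)\bigr) = -\int \py(y\mid\theta)\log\frac{\py_\pi(y\mid x^N)}{\py(y\mid\theta)}\,\dd y,
\]
substitute \eqref{FKA-1}, and expand $\log(1+\epsilon)=\epsilon-\epsilon^2/2+\cdots$ in the $O(N^{-1})$ correction. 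This splits $D$ into (i) a plug-in divergence $D(\py(y\mid\theta),\py(y\mid\htheta_{\mathrm{mle}}))$, (ii) a linear cross-term integrating the $N^{-1}$-order correction in \eqref{FKA-1} against $\py(y\mid\theta)$, and (iii) a quadratic term contributing at order $N^{-2}$.

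Next, for piece (i) I would Taylor expand $\log\py(y\mid\htheta_{\mathrm{mle}})$ about $\theta$ up to fourth order in $\htheta_{\mathrm{mle}}-\theta$ and perform the $y$-integral term by term; the resulting coefficients are precisely the geometric invariants $\gy{ij}{}$, $\Ty{ijk}{}$, $\Qy{ijkl}{}$, $\eGammay{ijk}{}$, $\mGammay{ijk}{}$, $\vym{ij}{}$, and their contractions. For pieces (ii) and (iii) the same $y$-integration produces terms involving $\E(\vym{ij}{}\vym{kl}{}\mid\theta)$, $\E(\partial_i\ly\,\partial_j\ly\,\vym{kl}{}\mid\theta)$, and contractions of $\mGammay{}{}-\mGammax{}{}$ with $\gy{}{}$. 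At this stage everything is a stochastic functional of $\htheta_{\mathrm{mle}}-\theta$.

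Then I would take the $x^N$-expectation using the standard stochastic expansion of the MLE: to the required order,
$\E[(\htheta_{\mathrm{mle}}-\theta)^i\mid\theta]$ involves $-\tfrac{1}{2N}\sum\gx{}{kl}\mGammax{kl}{i}$ and related terms,
$\E[(\htheta_{\mathrm{mle}}-\theta)^i(\htheta_{\mathrm{mle}}-\theta)^j\mid\theta]=N^{-1}\gx{}{ij}+O(N^{-2})$, and the third- and fourth-order cumulants enter at $O(N^{-2})$ through $\Tx{}{ijk}$ and $\vxe{}{ik}$; these are the same ingredients used in \citet{K96biometrika} and \citet{FKA04sjs}. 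The prior-dependent pieces from \eqref{FKA-1} and \eqref{FKA-3} combine to produce exactly the combination $\upi^i$ defined in the statement, so the prior appears only through $\upi$, not through its derivatives or products with $\mGammax{}{}$.

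The main obstacle is the bookkeeping that reassembles the bare $\partial_i\gx{jk}{}$ and $\partial_i\gy{jk}{}$ arising from differentiating the Edgeworth correction, the connection coefficients, and the $v$-fields into manifestly invariant objects such as $\enablay{k}{}\upi^i$ and the square $\gy{ij}{}\upi^i\upi^j$. The duality identities \eqref{duality} between $\eGammax{}{},\mGammax{}{}$ and between $\eGammay{}{},\mGammay{}{}$ are used repeatedly to rewrite non-covariant partial derivatives as covariant ones, and the invariance of each term on the right-hand side of \eqref{thm1} under reparametrization serves as a strong consistency check: every contribution must regroup into one of the scalar pieces listed, and the prior-dependence must factor through $\upi$ alone. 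Once this reorganization is carried out, matching coefficients to \eqref{thm1} and discarding the $o(N^{-2})$ remainder completes the proof.
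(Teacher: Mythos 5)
Your plan follows essentially the same route as the paper's own proof, which is itself only an outline: it invokes the expansions \eqref{FKA-1}--\eqref{FKA-3} of \citet{FKA04sjs} and defers the ``lengthy calculations parallel to'' \citet{K96biometrika} and \citet{H98as}, exactly the substitution-of-\eqref{FKA-1}, fourth-order Taylor expansion about $\htheta_{\mathrm{mle}}$, expectation over $y$ and $x^N$, and regrouping into invariant terms (with prior dependence factoring through $\upi$ and its covariant derivative) that you describe. Like the paper, you leave the detailed bookkeeping unexecuted, so the proposal is consistent with, and at the same level of rigor as, the published argument.
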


\begin{proof}[Outline of the Proof]
Expansions of the risk functions corresponding to \eqref{thm1} when the distributions of $x(i)$, $i=1,\ldots,N$, and $y$ are the same
are obtained by \citet{K96biometrika} for curved exponential families by using differential geometrical notions
and by \citet{H98as} for general models under rigorous regularity conditions.
\citet{FKA04sjs} obtained several related results when when the distributions of $x(i)$, $i=1,\ldots,N$, and $y$ are different.
The expansion \eqref{thm1} is shown by lengthy calculations parallel to those in \citet{K96biometrika} and \citet{H98as}
by using the results such as \eqref{FKA-1}, \eqref{FKA-2}, and \eqref{FKA-3} obtained by \citet{FKA04sjs}.
\end{proof}

\vspace{0.3cm}

The quantity $\sum_{i, j, k, l}  \E \left( \vxe{}{ik} \vxe{}{jl} \, \Big| \, \theta \right) \gx{kl}{}$
is the Efron curvature \citep{E75as} of the model manifold $\{p(x \mid \theta) \, | \, \theta \in \Theta\}$
at $\theta$, and $\sum_{i, j, k, l}  \E \left( \vym{ij}{} \vym{kl}{} \, \Big| \, \theta \right) \gx{}{ij} \gx{}{kl}$
is the mixture mean curvature discussed in \cite{K96biometrika}
of the model manifold $\{\py(y \mid \theta) \, | \, \theta \in \Theta\}$ at~$\theta$.

\vspace{0.3cm}

\begin{proof}[Proof of Theorem \ref{riskdiff}]
The desired result is obvious from Theorem \ref{risk-invariant} because
\eqref{thm1} has the form
\begin{align}
\E & \Bigl\{  D \bigl(\py(y \mid \theta), \py_\pi(y \mid x^N) \bigr) \, \Big| \, \theta \Bigr\} \notag \\
&= \frac{1}{2N} \sum_{i, j}  \gy{ij}{} \gx{}{ij}
+ \frac{1}{2N^2} \sum_{i, j} \gy{ij}{} \upi^i \upi^j
+ \frac{1}{N^2} \sum_{i, j, k}  \gy{ij}{} \gx{}{jk} \enablay{k}{} \upi^i \notag \\
&~~~ + \text{terms independent of $\pi$} + \oo(N^{-2}).\label{corproof}
\end{align}
\end{proof}
To derive Theorem \ref{riskdiff},
it is sufficient to show \eqref{corproof}.
Much less calculation is needed to verify \eqref{corproof} than to obtain all the explicit
terms in \eqref{thm1}.

\vspace{0.3cm}

\begin{proof}[Proof of Theorem \ref{maintheorem}]
Let $f(\theta) := \pi(\theta) / \piP(\theta)$.
Since
\begin{align}
\frac{1}{2} & \sum_i \sum_j \gxy{}{ij} \partial_i \log f \partial_j \log f + \Deltaxy \log f
= \frac{\displaystyle \Deltaxy f}{f}
- \frac{1}{2} \sum_i \sum_j \gxy{}{ij} \frac{\partial_i f \partial_j f}{f^2}
= 2 \frac{\Deltaxy f^\frac{1}{2}}{f^{\frac{1}{2}}},
\label{kakikae}
\end{align}
it is sufficient to show that the left-hand side of \eqref{main} is equal to
$(1/2) \gxy{}{ij} \partial_i \log f \partial_j \log f + \Deltaxy \log f$.

From \eqref{piP} and \eqref{4-0}, we have
\begin{align*}
\partial_i \log \pi & = \partial_i \log f + \partial_i \log \piP
= \partial_i \log f + \sum_{j, k} \oGammayx{ijk}{} \gxy{}{jk}.
\end{align*}
Let $r^j := \sum_{k, l}  \gx{}{kl} \left(\mGammay{kl}{j} - \mGammax{kl}{j} \right)$
and $s^i := \sum_{k, j} \gx{}{ik} \bigl(\oGammayx{kj}{j} - \eGammax{kj}{j} \bigr)$.
Then, from \eqref{FKA-3},
\begin{align}
u_{\pi}^i =& \sum_{i, k} \gx{}{ik} \left(\partial_k \log \pi - \sum_j \eGammax{kj}{j} \right) + r^i \notag \\
=& \sum_k \gx{}{ik} \left(\partial_k \log f + \sum_j \oGammayx{kj}{j} - \sum_j \eGammax{kj}{j} \right) + r^i \notag \\
=& \sum_k \gx{}{ik} \partial_k \log f + s^i + r^i.
\label{u=sr}
\end{align}
Thus, when $\pi = \piP$, $u_\mathrm{P}^i = s^i + r^i$.
From \eqref{6-2}, we have
\begin{align}
N^2 & \biggl( \E \big[ D (\py(y \mid \theta), \py_\pi(y \mid x^N) \big]
- \E \big[ D (\py(y \mid \theta), \py_{\mathrm{P}}(y \mid x^N) \big] \biggr) \notag \\
=& \frac{1}{2} \sum_{i,j} \gy{ij}{} \upi^i \upi^j
+ \sum_{i, j, k} \gy{ij}{} \gx{}{jk} \left(\partial_k u_{\pi}^i + \sum_l \eGammay{kl}{i} u_{\pi}^l \right)  \notag \\
& - \frac{1}{2} \sum_{i, j} \gy{ij}{} u_{\mathrm{P}}^i u_{\mathrm{P}}^j
- \sum_{i, j, k} \gy{ij}{} \gx{}{jk} \left( \partial_k u_{\mathrm{P}}^i + \sum_l \eGammay{kl}{i} u_{\mathrm{P}}^l \right) + \oo(1) \notag \\
\label{riskdiff-2}
=& \frac{1}{2} \sum_{i, j} \gy{ij}{} (\sum_k \gx{}{ik} \partial_k \log f + s^i + r^i)
( \sum_l \gx{}{jl} \partial_l \log f + s^j + r^j) \notag \\
& - \frac{1}{2} \sum_{i, j} \gy{ij}{} (s^i + r^i) (s^j + r^j) \notag \\
& + \sum_{i, j, k} \gy{ij}{} \gx{}{jk} \left\{ \sum_l \partial_k ( \gx{}{il} \partial_l \log f)
     + \sum_{l, m}  \eGammay{kl}{i} \gx{}{lm} \partial_m \log f \right\} + \oo(1) \notag \\
= & \frac{1}{2} \sum_{i, j} \gxy{}{ij} \partial_i \log f \partial_j \log f
+ \sum_{i, j, k} \gy{ij}{} \gx{}{ik} (\partial_k \log f) (s^j + r^j) \notag \\
& + \sum_{i, j, k, l} \gy{ij}{} \gx{}{ik} \partial_k (\gx{}{jl} \partial_l \log f)
+ \sum_{i, j, k, l, m} \gy{ij}{} \gx{}{jk} \eGammay{jk}{i} \gx{}{lm} \partial_m \log f + \oo(1).
\end{align}
Let $L_i := \partial_i \log f$.
From \eqref{kakikae}, it is sufficient to show that
\begin{align}
\label{target}
\sum_{i, j, k} \gy{ij}{} & \gx{}{ik} L_k ( s^j + r^j ) + \sum_{i, j, k, l}  \gy{ij}{} \gx{}{ik} \partial_k (\gx{}{jl} L_l)
+ \sum_{j, k, l, m} \gx{}{jk} \eGammay{klj}{} \gx{}{lm} L_m
\end{align}
is equal to
$\Deltaxy \log f = \sum_{i, j} \partial_i ( \gxy{}{ij} L_j ) + \sum_{i, j, k} \oGammayx{ij}{i} \gxy{}{jk} L_k$.
Since
\[
0 = \partial_i \delta_f^l = \partial_i (\sum_m \gx{}{lm} \gx{mn}{})
= \sum_m (\partial_i \gx{}{lm}) \gx{mn}{} + \sum_m \gx{}{lm} (\partial_i \gx{mn}{}),
\]
we have
\[
\partial_i \gx{}{lm} = -\sum_{j, k}  \gx{}{jl} \gx{}{km} (\partial_i \gx{jk}{}).
\]
Thus, from
\begin{align*}
\sum_{i, j} \partial_i ( \gxy{}{ij} L_j )
=& \sum_{i, j, k, l} \partial_i ( \gx{}{ik} \gx{}{jl} \gy{kl}{} L_j) \\
=& \sum_{i, j, k, l}(\partial_i \gx{}{ik}) \gx{}{jl} \gy{kl}{} L_j + \sum_{i, j, k, l} (\partial_i \gy{kl}{}) \gx{}{ik} \gx{}{jl} L_j
+ \sum_{i, j, k, l} \gx{}{ik} \gy{kl}{} \partial_i (\gx{}{jl} L_j),
\end{align*}
we have
\begin{align*}
\sum_{i, j, k, l} \gy{kl}{} \gx{}{ik} \partial_i (\gx{}{jl} L_j) =&
\sum_{i, j} \partial_i (\gxy{}{ij} L_j)
- \sum_{i, j, k, l}  (\partial_i \gx{}{ik}) \gx{}{jl} \gy{kl}{} L_j - \sum_{i, j, k, l} (\partial_i \gy{kl}{}) \gx{}{ik} \gx{}{jl} L_j \\
=& \sum_{i, j} \partial_i (\gxy{}{ij} L_j)
+ \sum_{i, j, m, n} (\partial_i \gx{mn}{}) \gx{}{im} \gxy{}{jn} L_j - \sum_{i, j, k, l}  (\partial_i \gy{kl}{}) \gx{}{ik} \gx{}{jl} L_j.
\end{align*}
Hence,
because of the duality \eqref{duality} of the e-connection and the m-connection,
\eqref{target} is equal to
\begin{align*}
\sum_{i, j, k, l} & \gy{ij}{}  \gx{}{ik} L_k \gx{}{jl} \bigl( \sum_m \oGammayx{lm}{m} - \sum_m \eGammax{lm}{m} \bigr)
+ \sum_{i, j, k, l, m} \gy{ij}{} \gx{}{ik} L_k \gx{}{lm} \bigl(\mGammay{lm}{j} - \mGammax{lm}{j} \bigr) \\
& + \sum_{i, j} \partial_i (\gxy{}{ij} L_j)
 + \sum_{i, j, k, l}(\partial_i \gx{jl}{}) \gx{}{il} \gxy{}{jk} L_k - \sum_{i, j, k, l} (\partial_i \gy{jl}{}) \gx{}{il} \gx{}{jk} L_k
+ \sum_{j, k, l, m} \gx{}{jk} \eGammay{klj}{} \gx{}{lm} L_m \\
=& \sum_{i, j} \partial_i (\gxy{}{ij} L_j) + \sum_{k, l, m}  \gxy{}{kl} \oGammayx{lm}{m} L_k \\
& -\sum_{i, j, k, l}  \left(\eGammax{ijl}{} + \mGammax{ilj}{} - \partial_i \gx{jl}{} \right) \gx{}{il} \gxy{}{jk} L_k
+ \sum_{i, j, k, l} \left(\eGammay{ijl}{} + \mGammay{ilj}{} - \partial_i \gy{jl}{} \right) \gx{}{il} \gx{}{jk} L_k \\
=& \Deltaxy \log f.\hspace*{308pt}\mbox{\qedhere}
\end{align*}
\end{proof}
%





\begin{acknowledgement}
The author appreciates constructive comments of the associate editor.
This research was\break partially supported
by Grant-in-Aid for Scientific Research (23650144, 26280005).
\end{acknowledgement}

\end{document}